\documentclass[11pt]{amsart}

\usepackage[T1]{fontenc}
\usepackage{lmodern}
\usepackage{microtype}
\usepackage{amsmath,amssymb,amsthm,mathtools}
\usepackage{enumitem}
\usepackage{booktabs}
\usepackage{array}
\usepackage{longtable}
\usepackage{tikz-cd}
\usepackage[colorlinks=true,linkcolor=blue!45!black,citecolor=blue!45!black,urlcolor=blue!45!black]{hyperref}
\usepackage[nameinlink,capitalise,noabbrev]{cleveref}

\setlist[enumerate,1]{label=\textup{(\arabic*)},leftmargin=2.3em}
\setlist[itemize]{leftmargin=1.8em}
\allowdisplaybreaks

\newtheorem{theorem}{Theorem}[section]
\newtheorem{proposition}[theorem]{Proposition}
\newtheorem{corollary}[theorem]{Corollary}
\newtheorem{lemma}[theorem]{Lemma}

\theoremstyle{definition}
\newtheorem{definition}[theorem]{Definition}
\newtheorem{example}[theorem]{Example}
\theoremstyle{remark}
\newtheorem{remark}[theorem]{Remark}

\newcommand{\K}{K}
\newcommand{\Z}{\mathbb Z}
\newcommand{\G}{\mathcal G}

\newcommand{\units}{\G^{(0)}}

\newcommand{\supp}{\operatorname{supp}}

\newcommand{\fin}{\mathrm{fin}}
\newcommand{\one}{\mathbf 1}
\newcommand{\cO}{\mathcal O}

\newcommand{\cU}{\mathcal U}
\newcommand{\cP}{\mathcal P}

\title[Baer-Type Ring Characterizations of Steinberg Algebras]{Baer-Type Ring Characterizations of Steinberg Algebras, with Applications to Leavitt Path Algebras}

\author{Morteza Ahmadi}
\address{Department of Pure Mathematics, Faculty of Mathematical Sciences\\
	Tarbiat Modares University, P.O.Box:14115-134, Tehran, Iran}
\email{morteza.ahmadi23@gmail.com\,\ mortezy.ahmadi@modares.ac.ir}

\subjclass[2020]{Primary 16S88, 22A22; Secondary 16W10, 16W50, 16D70, 16S10}
\keywords{Steinberg algebra, ample groupoid, Baer ring, Rickart ring, involution, graded ring, Leavitt path algebra, graph groupoid}

\hypersetup{
  pdftitle={Baer-Type Ring Characterizations of Steinberg Algebras, with Applications to Leavitt Path Algebras},
  pdfauthor={Morteza Ahmadi},
  pdfsubject={Baer, Rickart, graded, and involutive conditions for Steinberg algebras and Leavitt path algebras},
  pdfkeywords={Steinberg algebra, ample groupoid, Baer ring, Rickart ring, Leavitt path algebra}
}

\begin{document}

\begin{abstract}
We extend the theory of Baer, Rickart, and their local, graded, and  $*$-analogues for Steinberg algebras. 
First, we prove that positive definiteness passes from the coefficient field to the Steinberg algebra, and that this, together with graded von Neumann regularity, characterized by Steinberg and van Wyk, 
yields graded locally Rickart $*$-Steinberg algebras.
Then we show that local Baer behavior forces every compact open subset of the unit space to be extremally disconnected. Consequently, when compact open subspaces of the unit space are metrizable, local Baer behavior forces the groupoid to be discrete. 
In the discrete case, we use the orbit-by-orbit decomposition into finitary matrix algebras over isotropy group algebras to convert annihilator conditions into explicit matrix conditions.
For groupoids whose isotropy is trivial or infinite cyclic, these ingredients yield complete local and unital characterizations.  In particular, local Baer and graded local Baer conditions are equivalent to discreteness, while the ungraded local Baer $*$-condition additionally requires that every orbit with nontrivial isotropy be a singleton. 
We then apply the theory to boundary path groupoids.  Using the Steinberg algebra model, we 
obtain the Rickart, Baer,  and Baer $*$ characterizations of Leavitt path algebras, including the local and graded variants.  
\end{abstract}

\maketitle

\section{Introduction}
Steinberg algebras provide an algebraic interface between ample groupoids, inverse semigroup algebras, graph algebras, and graded ring theory \cite{Steinberg2010}. Their compact open bisections encode the topology and dynamics of a \linebreak groupoid, while convolution converts those data into a ring with local units.  This point of view has proved especially effective for Leavitt path algebras: the boundary path groupoid of a directed graph has a Steinberg algebra naturally isomorphic, as a graded algebra with involution, to the corresponding Leavitt path algebra \cite{ClarkFarthingSimsTomforde2014,Rigby2020}.  Graded Steinberg algebras and their module categories were developed in \cite{AraHazratLiSims2018}; recently, regularity and graded regularity have been characterized in terms of ample groupoids \cite{SteinbergVanWyk2026}.

For a ring $R$, the right annihilator of a subset $X\subseteq R$ is
\[
  r_R(X)=\{a\in R~|~xa=0\text{ for all }x\in X\}.
\]
A unital ring is right Rickart when the annihilator of every single element is generated by an idempotent, and it is Baer when the same is true for every subset.  Replacing idempotents by projections gives Rickart $*$-rings and Baer $*$-rings.  These properties arose in the algebraic study of operator rings (see \cite{Berberian1972,Berberian1988}).  Hazrat and Va\v{s} obtained graph-theoretic characterizations of these conditions for Leavitt path algebras, including local and graded versions \cite{HazratVas2018}.  
 They show, in particular, that a Leavitt path algebra is Baer exactly for a finite no-exit graph and, over a positive definite $*$-field, is Baer~$*$ exactly for a finite disjoint union of finite acyclic graphs and isolated loops.

The purpose of this paper is to move the annihilator problem one level upward: first analyze it for Steinberg algebras, and then recover the graph results by specialization.  

Our first result concerns positivity.  If $\G$ is a nonempty Hausdorff ample groupoid and $\K$ is a field with involution, then $A_{\K}(\G)$ is positive definite if and only if $K$ is positive definite.
Combined with graded regularity, it yields a characterization of graded locally Rickart  $*$-Steinberg algebras.

Our next step uses diagonal characteristic functions.  Let $U\subseteq\units$ be compact open and let $\{V_i\}$ be compact open subsets of $U$.  In the corner $A_{\K}(\G|_U)$, the right annihilator of the functions $\one_{V_i}$ consists precisely of functions whose range support lies in
\[
U\setminus\overline{\bigcup_iV_i}.
\]
If this annihilator is generated by an idempotent, the latter set must itself be compact open.  Thus local Baer behavior forces each compact open subspace of the unit space to be extremally disconnected.  If these subspaces are metrizable, they must be finite, so the entire groupoid is discrete.  The metrizability hypothesis is essential (see \Cref{rem:metric-essential}). 

For discrete groupoids, the remaining problem is algebraic.  Choosing a representative $u$ in each orbit $\cO$ and arrows from $u$ to the other units gives
\[
 A_{\K}(\G)
 \cong
 \bigoplus_{\cO\in \units/\G}
 M^{\fin}_{\cO}\bigl(\K\G_u^u\bigr).
\]
The isomorphism respects the canonical involution and, in the presence of a cocycle, the appropriate shifted grading.  Hence local annihilator conditions reduce to finite matrix corners over isotropy group algebras.  When every isotropy group is trivial or infinite cyclic, the only coefficient blocks are $\K$ and $\K[t,t^{-1}]$.  Their Baer behavior is benign, but $M_n(\K[t,t^{-1}])$ is not Rickart~$*$ for $n>1$.  This precisely detects the extra orbit condition required for ungraded local Baer $*$.

The graph application requires no countability assumption.  We prove that every infinite compact open subset of a boundary path space contains a nontrivial convergent sequence.  Since compact extremally disconnected spaces contain no such sequence, the local Baer condition forces the boundary path space to be discrete.  The latter is equivalent to the familiar graph condition: the graph is row-finite and no-exit, and every infinite path eventually becomes an infinite sink or periodic cycle.  The orbit-isotropy decomposition then recovers the matrix and Laurent polynomial blocks and yields all corresponding Leavitt path algebra results.

The paper is organized as follows.  \Cref{sec:prelim} reviews annihilator conditions and Steinberg algebras.  \Cref{sec:positive} establishes positive definiteness and Rickart consequences.  \Cref{sec:topological} develops the extremal-disconnectedness obstruction.  \Cref{sec:discrete} gives the orbit-matrix decomposition for discrete groupoids.  \Cref{sec:cyclic} treats cyclic isotropy.  \Cref{sec:graphs} specializes to graph groupoids and Leavitt path algebras.  Examples 
appear in \Cref{sec:examples}.

\section{Preliminaries}\label{sec:prelim}

\subsection{Rickart and Baer conditions: local, graded, and $*$-versions}

Throughout the paper, all rings are associative, but they need not be unital unless explicitly stated.  For $X\subseteq R$, define
\begin{align*}
r_R(X)&=\{a\in R~|~xa=0\text{ for all }x\in X\},\\
\ell_R(X)&=\{a\in R~|~ax=0\text{ for all }x\in X\}.
\end{align*}

A ring $R$ is \emph{right Rickart} if, for each $x\in R$, there is an idempotent $e=e^2$ such that $r_R(x)=eR$.  Left Rickart is defined analogously, and Rickart means both.  A unital ring is \emph{Baer} if, for each subset $X\subseteq R$, the right ideal $r_R(X)$ is generated by an idempotent.  The Baer condition is left-right symmetric.

A $*$-ring is a ring equipped with an involutive anti-automorphism $x\mapsto x^*$.  A projection is an element $p=p^2=p^*$.  A $*$-ring is Rickart $*$ (resp., Baer $*$), when the idempotent generators in the corresponding definitions can always be chosen to be projections.   

Rings with Rickart or Baer conditions, or their $*$-versions, are unital, because the generator of the annihilator of $0$ is an identity.

Let $\Gamma$ be an abelian group and $R=\bigoplus_{\gamma\in\Gamma}R_\gamma$ a $\Gamma$-graded ring.  The ring is a graded $*$-ring if $R_\gamma^*\subseteq R_{-\gamma}$.  It is graded right Rickart if the right annihilator of every homogeneous element is generated by a homogeneous idempotent, and graded Baer if the right annihilator of every set of homogeneous elements is generated by a homogeneous idempotent.  The graded $*$-versions use homogeneous projections.  These definitions reduce to the ordinary ones for the trivial grading.

Recall from \cite{HazratVas2018} that a possibly nonunital ring $R$ is \emph{locally right Rickart} if  $eRe$ is right Rickart for every idempotent $e\in R$. Locally Rickart and locally Baer are defined analogously. If  $R$ is a $*$-ring, it is locally Rickart $*$ (resp., locally Baer $*$) when $pRp$ is  Rickart $*$ (resp.,  Baer $*$) for every projection $p\in R$. In the graded setting, one takes corners by homogeneous idempotents, or by homogeneous projections for the $*$-conditions.

The conditions above pass to corners of the appropriate type.  Thus, if $R$ has a specified set $\cU$ of local units, it is enough to check the condition on $uRu$ for $u\in\cU$.  We use repeatedly the following facts. 

\begin{lemma}[\cite{HazratVas2018}, Sections 1 and 2]\label{lem:standard-ring-facts}
Let $R$ be a ring.
\begin{enumerate}
\item\label{lem2.3_1} If $R$ is locally unital and right semihereditary, then $R$ is locally right Rickart.
\item\label{lem2.3_2} If $R$ is a graded regular graded $*$-ring with homogeneous local $*$-units and graded proper involution, then $R$ is graded locally Rickart $*$.
\item\label{lem2.3_3} A Rickart ring is graded Rickart for every grading for which it is a graded ring, and the corresponding local statement holds.  A Baer ring is graded Baer.  The analogous implications hold for the $*$-versions.
\item\label{lem2.3_4} The properties considered above pass to corners and finite direct products.
\end{enumerate}
\end{lemma}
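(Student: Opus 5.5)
This lemma collects standard facts cited from \cite{HazratVas2018}, so the plan is to reconstruct each item from the definitions, relying on the cited sections only for routine verifications. The four items are essentially independent. I would treat them in the order (4), (3), (1), (2), since the corner statement in (4) is used implicitly in the others.

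For \eqref{lem2.3_4}, let $e$ be an idempotent of the required type (homogeneous and/or a projection) in a ring $R$ with the relevant property, and let $X\subseteq eRe$. I would check that $r_{eRe}(X)=e\,r_R(X)\,e$. Indeed, $a\in eRe$ with $Xa=0$ lies in $r_R(X)$ and $a=eae$. Conversely, since $xe=x$ for $x\in X$, the right ideal $r_R(X)$ is stable under left multiplication by $e$. If $r_R(X)=fR$, then $e f$ satisfies $x(ef)=xf=0$, so $ef\in r_R(X)$ and hence $fef=ef$. From this one checks that $g=efe$ is an idempotent of $eRe$ with $r_{eRe}(X)=g\,eRe$. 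In the $*$-case $g$ is a projection whenever $f$ is, because $efe=(efe)^*$ once $ef=fef$ holds. In the graded case $g$ is homogeneous of degree $0$. Finite direct products are handled componentwise, since annihilators split across the factors. The local versions are immediate, because corners of corners are corners.

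For \eqref{lem2.3_3}, I would first show that in a unital graded ring the annihilator of a set of homogeneous elements is a graded right ideal. If it equals $eR$ with $e$ idempotent, then $e$ is a left identity on that ideal, so each component $e_\gamma$ lies in it. Comparing degree-$0$ parts of $e=e\cdot e$ shows that $e_0$ is an idempotent generating the same ideal. The same argument applied to a projection gives a homogeneous projection, since $e_0^*=e_0$.

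For \eqref{lem2.3_1}, take a local unit $u$ and $x\in uRu$. Semiheredity makes the principal right ideal $xR$ projective, so the surjection $uR\to xuR$ splits. Its kernel $r(x)\cap uR$ is then a direct summand of $uR$, hence of the form $e\,uR$ for an idempotent $e\in uRu$. Intersecting with $uRu$ gives right Rickart for the corner. This uses that corners by local units suffice, which is the observation made just before the lemma.

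For \eqref{lem2.3_2}, take a homogeneous projection $p$ and a homogeneous $x\in pRp$. Graded regularity gives a homogeneous $y$ with $xyx=x$. Then $x^*R$, and hence $x^*xR$, is generated by a homogeneous idempotent. Properness of the involution, namely $a^*a=0\Rightarrow a=0$ on homogeneous elements, gives $r(x)=r(x^*x)$. The standard Berberian argument then produces a homogeneous projection $q$ with $r(x)=(p-q)pRp$. \textbf{Main obstacle:} producing projections rather than mere idempotents. The intended route goes through the graded proper involution, which shows $x^*x$ is graded regular with a hermitian quasi-inverse. I would rely on \cite{HazratVas2018} for this step.
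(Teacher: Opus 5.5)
The paper gives no proof of this lemma; it quotes it from \cite{HazratVas2018}. Your treatment of items (1), (3) and (4) is therefore a self-contained reconstruction rather than a variant of an argument in the text, and it is correct up to two small slips. For (4), the corner identity $r_{eRe}(X)=e\,r_R(X)\,e$, the relation $fef=ef$, and $g=efe$ do the job. For (1), the split surjection $uR\to xR$, $a\mapsto xa$, with kernel $r(x)\cap uR=e\,uR$, does the job.

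The first slip is in (3). The degree-$0$ part of $e=e\cdot e$ only gives $e_0=\sum_\gamma e_\gamma e_{-\gamma}$. You need the degree-$\gamma$ part of $ee_\gamma=e_\gamma$, which is valid because $e_\gamma\in eR$ by gradedness. It gives $e_0e_\gamma=e_\gamma$ for every $\gamma$, hence $e_0^2=e_0$ and $e_0e=e$, so $eR=e_0R$.

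The second slip is in (4). When $e,f$ are projections, $efe$ is self-adjoint automatically; the relation $fef=ef$ is what makes it idempotent.

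For (2) you defer the essential step to the reference, as the paper does. The route you indicate would not close it, however: a hermitian quasi-inverse of $s=x^*x$ does not by itself yield a projection. In $M_2(\mathbb{R})$ with transpose, take $x=s=E_{11}$ and $z=E_{11}+E_{12}+E_{21}$. Then $z=z^*$ and $szs=s$, yet $sz=E_{11}+E_{12}$ is not a projection.

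The missing idea is to use properness on $s$ itself and pass through a group inverse.
\begin{itemize}
\item Work in the unital corner $S=pRp$. It is graded regular, so for homogeneous $a$ one has $aS=r_S(\ell_S(a))$ and $Sa=\ell_S(r_S(a))$.
\item Graded properness, checked on homogeneous elements (enough because these annihilators are graded), gives $r_S(s)=r_S(s^2)$ and $\ell_S(s)=\ell_S(s^2)$. Hence $sS=s^2S$ and $Ss=Ss^2$.
\item Write $s=s^2a=bs^2$ with $a,b\in S$ of degree $0$. Then $s^\#=bsa$ is a group inverse of $s$ of degree $0$.
\item Since $(s^\#)^*$ is also a group inverse of $s^*=s$, uniqueness gives $s^\#=(s^\#)^*$. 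So $q=ss^\#$ is a homogeneous projection with $Ss=Sq$.
\item Combining this with $r_S(x)=r_S(x^*x)$ gives $Sx=\ell_S(r_S(x))=\ell_S(r_S(x^*x))=Sx^*x=Sq$. Hence $r_S(x)=(p-q)S$.
\item Left annihilators follow by applying the involution to $x^*$.
\end{itemize}
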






An involution is \emph{positive definite} if, for every finite family $x_1,\ldots,x_n$,
\[
  \sum_{i=1}^n x_ix_i^*=0
  \quad\Longrightarrow\quad
  x_1=\cdots=x_n=0.
\]
It is \emph{proper} if $xx^*=0$ implies $x=0$.  Positive definiteness implies properness.  In a graded $*$-ring, graded properness means the same implication for homogeneous $x$.

\subsection{Ample groupoids and Steinberg algebras}

Let $\G$ be a Hausdorff ample groupoid.  Thus $\G$ is an etale groupoid, its unit space $\units$ is locally compact Hausdorff and zero-dimensional, and the compact open bisections form a basis for the topology.  We write $\mathbf{r,s}:\G\to\units$ for range and source.  For $U\subseteq\units$, the reduction is
\[
   \G|_U=\mathbf{r}^{-1}(U)\cap \mathbf{s}^{-1}(U).
\]
The isotropy group at $u\in\units$ is $\G_u^u=\mathbf{r}^{-1}(u)\cap \mathbf{s}^{-1}(u)$, and the orbit of $u$ is $\G u=\mathbf{r}(\mathbf{s}^{-1}(u))$.

Let $\K$ be a field.  The Steinberg algebra $A_{\K}(\G)$ is the $\K$-vector space of compactly supported, locally constant functions $f:\G\to\K$, with convolution
\[
 (f*g)(\gamma)=\sum_{\alpha\beta=\gamma}f(\alpha)g(\beta).
\]
The sum is finite.  The characteristic functions $\one_B$ of compact open bisections span the algebra, and
\[
 \one_B*\one_C=\one_{BC}.
\]
If $U\subseteq\units$ is compact open, then $\one_U$ is an idempotent, the family of all such functions is a set of local units, and
\begin{equation}\label{eq:corner-reduction}
  \one_U A_{\K}(\G)\one_U\cong A_{\K}(\G|_U).
\end{equation}
The algebra is unital exactly when $\units$ is compact, in which case its identity is $\one_{\units}$.

Suppose that $\K$ is a field with involution.  Then $A_{\K}(\G)$ is a $*$-algebra under
\begin{equation}\label{eq:steinberg-star}
 f^*(\gamma)=\overline{f(\gamma^{-1})}.
\end{equation}
Every $\one_U$ with $U\subseteq\units$ compact open is a projection.

Let $c:\G\to\Gamma$ be a continuous cocycle into a discrete abelian group.  Then
\[
 A_{\K}(\G)_\gamma
 =\{f\in A_{\K}(\G)~|~\supp(f)\subseteq c^{-1}(\gamma)\}
\]
gives a $\Gamma$-grading, and \eqref{eq:steinberg-star} makes it a graded $*$-algebra.  This construction and its representation theory are discussed in \cite{AraHazratLiSims2018}.

\section{Positive definiteness and graded Rickart $*$-condition}\label{sec:positive}

The following result is independent of freeness, effectiveness, isotropy, and countability.

\begin{proposition}\label{thm:positive definite}
Let $\G$ be a nonempty Hausdorff ample groupoid and let $\K$ be a field with involution.  Then the  involution on $A_{\K}(\G)$ is positive definite if and only if the involution on $\K$ is positive definite.
\end{proposition}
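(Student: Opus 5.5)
The plan is to prove the two directions separately. The ``only if'' direction should be immediate from an embedding of $\K$, while the ``if'' direction reduces to evaluating a sum of convolutions at unit arrows.

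For ``only if'', I use that $\G$ is nonempty and ample, so there is a nonempty compact open $U\subseteq\units$. The map $\lambda\mapsto\lambda\one_U$ is an injective ring homomorphism $\K\to A_{\K}(\G)$. Since $\one_U$ is a projection, it satisfies $(\lambda\one_U)^*=\overline{\lambda}\one_U$, so the map respects the involutions. Now suppose $\sum_i\lambda_i\overline{\lambda_i}=0$ in $\K$. Then
\[
\sum_i(\lambda_i\one_U)(\lambda_i\one_U)^*=\Bigl(\sum_i\lambda_i\overline{\lambda_i}\Bigr)\one_U=0.
\]
Positive definiteness of $A_{\K}(\G)$ gives $\lambda_i\one_U=0$, and hence $\lambda_i=0$.

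For ``if'', let $f_1,\dots,f_n\in A_{\K}(\G)$ satisfy $\sum_i f_i*f_i^*=0$. I evaluate this at a unit $u\in\units$. By the convolution formula and \eqref{eq:steinberg-star},
\[
(f_i*f_i^*)(u)=\sum_{\alpha\beta=u}f_i(\alpha)\overline{f_i(\beta^{-1})}=\sum_{\alpha\in\mathbf r^{-1}(u)}f_i(\alpha)\overline{f_i(\alpha)},
\]
because $\alpha\beta=u$ forces $\beta=\alpha^{-1}$. The sum is finite, since $\supp f_i$ is compact and $\mathbf r^{-1}(u)$ is discrete in an étale groupoid, so their intersection is finite. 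Summing over $i$ gives
\[
0=\sum_i\sum_{\alpha\in\mathbf r^{-1}(u)}f_i(\alpha)\overline{f_i(\alpha)}.
\]
This is a finite sum of terms of the form $x\overline{x}$ in $\K$. Positive definiteness of $\K$ then forces $f_i(\alpha)=0$ for every $i$ and every $\alpha$ with $\mathbf r(\alpha)=u$. Every arrow has some range $u$, so each $f_i$ vanishes identically.

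The only potential obstacle is the finiteness and support bookkeeping in the evaluation at units. That is routine given the étale property, and the Hausdorff hypothesis ensures the algebra consists of genuine functions, so the pointwise evaluation is legitimate. I expect no real difficulty.
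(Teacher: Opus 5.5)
Your proof is correct and is essentially the paper's argument, with the two directions presented in the opposite order. For one direction you evaluate $\sum_i f_i*f_i^*$ at each unit $u$ to get the finite sum $\sum_i\sum_{\alpha\in\mathbf r^{-1}(u)} f_i(\alpha)\overline{f_i(\alpha)}$; for the other you embed $\K$ via $\lambda\mapsto\lambda\one_U$ for a nonempty compact open $U\subseteq\units$.
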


\begin{proof}
Assume first that $\K$ is positive definite.  Let $f_1,\ldots,f_n\in A_{\K}(\G)$ satisfy
\[
 \sum_{i=1}^n f_i*f_i^*=0.
\]
For a unit $u\in\units$, the convolution formula gives
\begin{align*}
 0
 &=\sum_{i=1}^n(f_i*f_i^*)(u)\\
 &=\sum_{i=1}^n\sum_{\alpha\beta=u}f_i(\alpha)f_i^*(\beta)\\
 &=\sum_{i=1}^n\sum_{\gamma\in r^{-1}(u)}
     f_i(\gamma)\overline{f_i(\gamma)}.
\end{align*}
Since the $f_i$ have compact support and $\G$ is etale, only finitely many summands are nonzero.   Positive definiteness of $\K$ implies $f_i(\gamma)=0$ for every $i$ and every $\gamma\in \mathbf{r}^{-1}(u)$.  Because $u\in\units$ was arbitrary, $f _i=0$ for all $i$.

Conversely, assume that $A_{\K}(\G)$ is positive definite.  Choose a nonempty compact open set $U\subseteq\units$.  If $k_1,\ldots,k_n\in\K$ satisfy $\sum_i k_i\overline{k_i}=0$, then
\[
  \sum_{i=1}^n(k_i\one_U)(k_i\one_U)^*
  =\left(\sum_{i=1}^n k_i\overline{k_i}\right)\one_U=0.
\]
Positive definiteness of $A_{\K}(\G)$ yields $k_i\one_U=0$, and hence $k_i=0$ for every~$i$.
\end{proof}

\begin{corollary}\label{cor:graded-rickart-star}
Let $c:\G\to\Gamma$ be a continuous cocycle.  Suppose that $\K$ is positive definite and $A_{\K}(\G)$ is graded von Neumann regular for the cocycle grading (i.e., the $\Gamma$-grading induced by $c$).
 Then $A_{\K}(\G)$ is graded locally Rickart $*$.  If $\units$ is compact, it is a graded Rickart $*$-ring.
\end{corollary}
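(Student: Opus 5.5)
The plan is to apply \Cref{lem:standard-ring-facts}\eqref{lem2.3_2} to $R=A_{\K}(\G)$ with its cocycle grading and the involution \eqref{eq:steinberg-star}. That result needs four inputs: $R$ is a graded $*$-ring, $R$ is graded regular, $R$ has homogeneous local $*$-units, and the involution is graded proper.

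First, $R$ is a graded $*$-ring. If $\supp(f)\subseteq c^{-1}(\gamma)$, then $f^*(\eta)=\overline{f(\eta^{-1})}$ is nonzero only when $c(\eta^{-1})=\gamma$. The cocycle identity gives $c(\eta^{-1})=-c(\eta)$, so $f^*$ is supported in $c^{-1}(-\gamma)$, and therefore $R_\gamma^*\subseteq R_{-\gamma}$.

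Second, graded regularity is exactly the hypothesis.

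Third, for local units we take the family $\{\one_U : U\subseteq\units \text{ compact open}\}$, which is a set of local units as recalled before \eqref{eq:corner-reduction}. Each $\one_U$ is a projection. It is homogeneous of degree $0$ because $c$ vanishes on units: $c(u)=c(uu)=2c(u)$ forces $c(u)=0$. Moreover, any finite family of elements of $R$ has a common local unit of this form: take $U$ to be the union of the compact sets $\mathbf r(\supp f_i)\cup\mathbf s(\supp f_i)$, which is compact open because $\mathbf r$ and $\mathbf s$ are local homeomorphisms and a compact support is covered by finitely many compact open bisections.

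Fourth, by \Cref{thm:positive definite} the involution on $R$ is positive definite. Positive definiteness implies properness, as noted in \Cref{sec:prelim}, and properness implies graded properness by restricting to homogeneous $x$.

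With these four inputs, \Cref{lem:standard-ring-facts}\eqref{lem2.3_2} yields that $R$ is graded locally Rickart $*$. If $\units$ is compact, then $\one_{\units}$ is the identity and is a homogeneous projection. Taking the corner at this projection, $\one_{\units}R\one_{\units}=R$, so $R$ is graded Rickart $*$ by the definition of graded locally Rickart $*$.

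I do not expect a serious obstacle. The only point to check is that the local units meet whatever convention \cite{HazratVas2018} attaches to ``homogeneous local $*$-units''. That means they must be homogeneous projections, of degree $0$, and they must form a directed family absorbing finitely many elements. All of this was verified in the third step.
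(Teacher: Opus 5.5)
Your proposal is correct and is essentially the paper's proof. Both arguments use \Cref{thm:positive definite} to get a positive definite, hence graded proper, involution. Both take the projections $\one_U$ for compact open $U\subseteq\units$ as homogeneous local $*$-units, apply \Cref{lem:standard-ring-facts}\ref{lem2.3_2}, and handle the unital case with $\one_{\units}$. The paper leaves implicit the routine checks you spell out: that $c$ vanishes on units, that $R_\gamma^*\subseteq R_{-\gamma}$, and that finitely many elements have a common local unit.
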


\begin{proof}
By \Cref{thm:positive definite}, the canonical involution is positive definite, hence graded proper.  The compact open unit projections are homogeneous local $*$-units.  Apply \Cref{lem:standard-ring-facts}\ref{lem2.3_2}.  Compactness of $\units$ supplies an identity.
\end{proof}

Steinberg and van Wyk proved that, 
graded regularity is equivalent to regularity of the neutral component; they then characterized the latter by approximate quasi-compactness and isotropy conditions \cite{SteinbergVanWyk2026}.  The following combines their theorem with \Cref{cor:graded-rickart-star}.

\begin{corollary}\label{cor:svw-rickart}
Let $c:\G\to\Gamma$ be a continuous cocycle, let $\G_0=c^{-1}(0)$, and let $\K$ be a positive definite field.  If $\G_0$ is a directed union of quasi-compact open subgroupoids and the order of every finite subgroup of every isotropy group of $\G_0$ is invertible in $\K$, then $A_{\K}(\G)$ is graded locally Rickart $*$.
\end{corollary}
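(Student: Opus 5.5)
The statement follows by chaining the Steinberg--van Wyk characterization with \Cref{cor:graded-rickart-star}. No new annihilator computation is needed. The only thing to arrange is that the hypotheses of \Cref{cor:svw-rickart} are exactly those under which \cite{SteinbergVanWyk2026} produce graded von Neumann regularity of $A_{\K}(\G)$ for the cocycle grading.

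First I will identify the neutral component. Since $c$ is continuous into a discrete group, $\G_0=c^{-1}(0)$ is a clopen subgroupoid containing $\units$. Hence $\G_0$ is itself a Hausdorff ample groupoid with the same unit space. Moreover, the degree-zero component $A_{\K}(\G)_0$ consists of functions supported in $\G_0$, so it is canonically identified with $A_{\K}(\G_0)$. This uses only that convolution of functions supported in $\G_0$ stays in $\G_0$, together with the definition of the grading.

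Second, I will invoke the two results of \cite{SteinbergVanWyk2026} recalled just before the statement. The first says that $A_{\K}(\G)$ is graded von Neumann regular if and only if $A_{\K}(\G)_0\cong A_{\K}(\G_0)$ is von Neumann regular. The second says that $A_{\K}(\G_0)$ is regular when $\G_0$ is a directed union of quasi-compact open subgroupoids and the orders of finite subgroups of isotropy groups of $\G_0$ are invertible in $\K$. Only the sufficiency direction is needed, and these two conditions are precisely the hypotheses of \Cref{cor:svw-rickart}. Together they give graded regularity of $A_{\K}(\G)$.

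Third, since $\K$ is positive definite, \Cref{cor:graded-rickart-star} applies directly and yields that $A_{\K}(\G)$ is graded locally Rickart $*$.

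The main point requiring care is the second step: matching the exact form of the Steinberg--van Wyk hypotheses. In particular, I must check whether their theorem is stated for $A_{\K}(\G_0)$ or directly for the neutral component of the grading, and whether they need extra assumptions such as second countability. If their regularity criterion is phrased for $\G$ with the trivial cocycle, then the identification $A_{\K}(\G)_0\cong A_{\K}(\G_0)$ from the first step bridges the gap. I do not expect the positivity or local-unit parts to cause difficulty.
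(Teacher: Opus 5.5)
Your proposal is correct and follows the paper's own route. The hypotheses give regularity of $A_{\K}(\G_0)$, the Steinberg--van Wyk theorems (their Theorems 4.2 and 4.3) then give graded regularity of $A_{\K}(\G)$, and \Cref{cor:graded-rickart-star} finishes; the paper's only additional remark is that a field is von Neumann regular, which is the coefficient hypothesis in their regularity criterion.
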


\begin{proof}
A field is von Neumann regular.  The hypotheses imply that $A_{\K}(\G_0)$ is regular, and hence $A_{\K}(\G)$ is graded regular by \cite[Theorems 4.2 and 4.3]{SteinbergVanWyk2026}.  Apply \Cref{cor:graded-rickart-star}.
\end{proof}

\section{Locally Baer Steinberg algebras have Stonean unit corners}\label{sec:topological}

This section extracts a topological consequence of the Baer property using only characteristic functions of compact open subsets of the unit space.

\begin{lemma}\label{lem:diagonal-annihilator}
Let $U\subseteq\units$ be compact open, let $B=A_{\K}(\G|_U)$, and let $\{V_i~|~i\in I\}$ be a family of compact open subsets of $U$.  Put
\[
  O=\bigcup_{i\in I}V_i,
  \qquad
  W=U\setminus\overline O,
\]
where the closure is taken in $U$.  Then
\begin{equation}\label{eq:diagonal-annihilator}
 r_B\bigl(\{\one_{V_i}~|~i\in I\}\bigr)
 =\{f\in B~|~\supp(f)\subseteq \mathbf{r}^{-1}(W)\}.
\end{equation}
\end{lemma}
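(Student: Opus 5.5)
The plan is to compute the product $\one_{V}*f$ pointwise and then exploit local constancy of $f$ together with the fact that $\mathbf r$ is a local homeomorphism. For a compact open $V\subseteq U$ and $f\in B$, the convolution formula gives
\[
(\one_V*f)(\gamma)=\sum_{\alpha\beta=\gamma}\one_V(\alpha)f(\beta)=\one_V(\mathbf r(\gamma))\,f(\gamma),
\]
because $\one_V$ is supported on units, which forces $\alpha=\mathbf r(\gamma)$ and $\beta=\gamma$. Hence $\one_V*f=0$ exactly when $f$ vanishes on $\mathbf r^{-1}(V)$. It follows that $f$ lies in the right annihilator of $\{\one_{V_i}\}$ if and only if $f$ vanishes on $\mathbf r^{-1}(O)$, that is, if and only if
\[
\supp(f)\cap \mathbf r^{-1}(O)=\emptyset,
\]
where $\supp(f)$ denotes the set where $f\neq 0$.

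The inclusion ``$\supseteq$'' is now immediate. If $\supp(f)\subseteq\mathbf r^{-1}(W)$, then $\mathbf r(\supp f)\subseteq W$, and $W$ is disjoint from $O$. So $f$ vanishes on $\mathbf r^{-1}(O)$.

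For ``$\subseteq$'', suppose $f$ vanishes on $\mathbf r^{-1}(O)$, and let $\gamma\in\G|_U$ with $f(\gamma)\neq0$. Since $f$ is locally constant, the set $\{f\neq 0\}$ is open. So there is a compact open bisection $C\ni\gamma$ on which $f$ is nonzero. Because $\G$ is étale, $\mathbf r(C)$ is an open subset of $U$ containing $\mathbf r(\gamma)$. By hypothesis $\mathbf r(C)\cap O=\emptyset$, and since $\mathbf r(C)$ is open, this gives $\mathbf r(C)\cap\overline O=\emptyset$. Thus $\mathbf r(\gamma)\in U\setminus\overline O=W$. This shows $\supp(f)\subseteq \mathbf r^{-1}(W)$.

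The only real point is this closure step: openness of $\mathbf r$ on bisections, combined with openness of the nonvanishing set of a locally constant function, upgrades ``disjoint from $O$'' to ``disjoint from $\overline O$''. I would also note the reading of $\supp$. If $\supp$ is meant as the closure of the nonzero set, that set is clopen here, because $f$ is locally constant. The closed-support version then follows from the same argument, since $\mathbf r^{-1}(W)$ is open and the nonzero set equals its own closure.
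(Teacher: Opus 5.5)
Your proof is correct and follows the paper's argument essentially step for step. The pointwise identity $(\one_V*f)(\gamma)=\one_V(\mathbf{r}(\gamma))f(\gamma)$ reduces the annihilator condition to vanishing on $\mathbf{r}^{-1}(O)$, and openness of $\mathbf{r}(C)$, for a compact open bisection $C$ on which $f$ is nonzero, upgrades disjointness from $O$ to disjointness from $\overline O$. The only cosmetic difference is that you argue this closure step directly, whereas the paper phrases it as a contradiction.
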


\begin{proof}
For $V\subseteq U$ compact open, $f\in B$, and $\gamma\in\G|_U$ we have 
\begin{equation}\label{eq:left-diagonal}
  (\one_V*f)(\gamma)=\one_V(\mathbf{r}(\gamma))f(\gamma).
\end{equation}
Consequently, $f$ lies in the annihilator on the left-hand side of \eqref{eq:diagonal-annihilator} if and only if $f(\gamma)=0$ whenever $\mathbf{r}(\gamma)\in O$.

Suppose $\mathbf{r}(\gamma)\in\overline O$ and $f(\gamma)\neq0$.  Since $f$ is locally constant and compact open bisections form a basis, there is a compact open bisection $C$ containing $\gamma$ on which $f$ is a fixed nonzero scalar.  The open set $\mathbf{r}(C)$ meets $O$, so there is $\eta\in C$ with $\mathbf{r}(\eta)\in O$, contradicting the previous paragraph.  Hence $\supp(f)\subseteq \mathbf{r}^{-1}(W)$.  The converse follows immediately from \eqref{eq:left-diagonal}.
\end{proof}

Recall that a compact Hausdorff space $X$ is \emph{extremally disconnected}, or \emph{Stonean}, if the closure of every open subset of $X$ is open.

\begin{theorem}\label{thm:local-baer-stonean}
Let $\G$ be a Hausdorff ample groupoid.  If $A_{\K}(\G)$ is locally Baer, then every compact open subspace of $\units$ is extremally disconnected.

The same conclusion holds if $A_{\K}(\G)$ is graded locally Baer for a cocycle grading, locally Baer $*$, or graded locally Baer $*$.
\end{theorem}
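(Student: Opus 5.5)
Fix a compact open $U\subseteq\units$ and an open set $O\subseteq U$. The goal is to show that $\overline O$ (closure in $U$) is open. Since $U$ is zero-dimensional, $O$ is a union of compact open sets $V_i\subseteq U$. We work in the corner $B=\one_U A_{\K}(\G)\one_U\cong A_{\K}(\G|_U)$ of \eqref{eq:corner-reduction}. The idempotent $\one_U$ is homogeneous of degree $0$ for any cocycle grading and is a projection, so in each of the four variants $B$ is Baer of the corresponding type. The elements $\one_{V_i}$ are homogeneous of degree $0$, since units have cocycle value $0$. Hence in every variant the set $X=\{\one_{V_i}\}$ is admissible, and $r_B(X)=eB$ for some idempotent $e\in B$. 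This is the only input we need from the Baer hypothesis, so the four cases are handled uniformly.

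Put $W=U\setminus\overline O$, which is open. By \Cref{lem:diagonal-annihilator}, $eB$ consists exactly of the $f\in B$ with $\supp(f)\subseteq\mathbf r^{-1}(W)$. I will show that $W=\mathbf r(\supp e)$.

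First, $e\in eB$, so $\supp(e)\subseteq\mathbf r^{-1}(W)$, and therefore $\mathbf r(\supp e)\subseteq W$.

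Conversely, let $w\in W$. Choose a compact open $V\subseteq W$ containing $w$. Then $\one_V\in eB$, so $\one_V=e*\one_V$. Evaluating at $w$ gives
\[
1=(e*\one_V)(w)=\sum_{\alpha\beta=w}e(\alpha)\one_V(\beta)=e(w),
\]
because $\one_V$ is supported on units and so only $\beta=w$, $\alpha=w$ contribute. Thus $w\in\supp e$ and $w=\mathbf r(w)\in\mathbf r(\supp e)$.

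So $W=\mathbf r(\supp e)$. Since $e$ is locally constant with compact support, $\supp e$ is compact open, and $\mathbf r$ is continuous and open because $\G$ is étale. Hence $W$ is compact, so closed in the Hausdorff space $U$. It follows that $\overline O=U\setminus W$ is open. Since $O$ was arbitrary, $U$ is extremally disconnected.

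The main point to check is the converse inclusion $W\subseteq\mathbf r(\supp e)$, which the evaluation argument above settles. The remaining checks are routine:
\begin{itemize}
\item the corner $B$ inherits the relevant Baer property in each variant, which holds by \Cref{lem:standard-ring-facts}\ref{lem2.3_4} and because $\one_U$ is a homogeneous projection;
\item the isomorphism \eqref{eq:corner-reduction} matches annihilators in the corner with those in $A_{\K}(\G|_U)$, so \Cref{lem:diagonal-annihilator} applies.
\end{itemize}
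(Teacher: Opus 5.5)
Your proof is correct and follows the paper's argument essentially step for step: you write the annihilator of $\{\one_{V_i}\}$ in the corner as $eB$, use \Cref{lem:diagonal-annihilator} to obtain $W=\mathbf r(\supp e)$, and conclude that $\overline O=U\setminus W$ is open. Your variations are minor streamlinings of the same route: you evaluate $e*\one_V$ to get $e(w)=1$ directly, and you use only that $\mathbf r(\supp e)$ is compact, hence closed, rather than also showing it is open.
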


\begin{proof}
Fix compact open $U\subseteq\units$ and an open subset $O\subseteq U$.  Since compact open sets form a basis, write $O=\bigcup_{i\in I}V_i$ with each $V_i$ compact open.  By local Baer, the corner
\[
 B=\one_U A_{\K}(\G)\one_U\cong A_{\K}(\G|_U)
\]
is Baer.  Hence there is an idempotent $e\in B$ such that
\[
 r_B\bigl(\{\one_{V_i}~|~i\in I\}\bigr)=eB.
\]
Set $W=U\setminus\overline O$.  By \Cref{lem:diagonal-annihilator}, $e$ is supported in $\mathbf{r}^{-1}(W)$, and therefore
\begin{equation}\label{eq:range-one-inclusion}
 \mathbf{r}(\supp(e))\subseteq W.
\end{equation}

For the reverse inclusion, let $x\in W$.  Choose a compact open neighborhood $V$ with $x\in V\subseteq W$.  Again by \Cref{lem:diagonal-annihilator}, $\one_V\in eB$, so $\one_V=e*g$ for some $g\in B$.  Evaluating at $x$ gives
\[
 1=(e*g)(x)=\sum_{\gamma\in r^{-1}(x)}e(\gamma)g(\gamma^{-1}).
\]
At least one summand has $e(\gamma)\neq0$, and hence $x\in \mathbf{r}(\supp(e))$.  Together with \eqref{eq:range-one-inclusion}, this proves
\[
  W=\mathbf{r}(\supp(e)).
\]
The support of a locally constant compactly supported function is compact open.  Covering it by finitely many compact open bisections shows that its range is also compact open.  Thus $W$ is compact open in $U$, and
\[
 \overline O=U\setminus W
\]
is open.  Therefore $U$ is extremally disconnected.

For the graded Baer condition, the family $\{\one_{V_i}\}$ consists of homogeneous degree-zero elements, so the same argument applies.  The Baer $*$ versions imply the corresponding Baer conclusion after replacing a projection by its underlying idempotent.
\end{proof}
We will use the following standard fact. A metrizable extremally disconnected space is discrete; hence, if it is also compact, it must be finite.
\begin{lemma}\label{lem:metric-stonean-finite}
Every compact metrizable extremally disconnected space is finite.
\end{lemma}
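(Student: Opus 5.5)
The plan is to show first that every point of a metrizable extremally disconnected space $X$ is isolated, so that $X$ is discrete; compactness then forces finiteness. Assume, for contradiction, that some point $x\in X$ is not isolated. Since $X$ is metrizable, I will pick a sequence $(x_n)$ of pairwise distinct points with $x_n\neq x$ and $x_n\to x$. I will also arrange that the $x_n$ have pairwise disjoint open neighbourhoods $G_n$ with $x\notin\overline{G_n}$. A convenient choice is to take $x_n$ with strictly decreasing distances $d(x_n,x)$ and let $G_n$ be a small ball around $x_n$. The radius of $G_n$ should be less than a third of the gaps between $d(x_n,x)$ and the neighbouring distances $d(x_{n\pm1},x)$, and also less than $d(x_n,x)/3$. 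This is the usual construction of a discrete sequence converging to a point.

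Next, split the indices into evens and odds, and define the open sets
\[
O_1=\bigcup_{n\ \text{even}}G_n,\qquad O_2=\bigcup_{n\ \text{odd}}G_n .
\]
They are disjoint and open. Since $x_{2n}\to x$ and $x_{2n+1}\to x$, the point $x$ lies in both $\overline{O_1}$ and $\overline{O_2}$.

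Extremal disconnectedness says that $\overline{O_1}$ is open. Because $O_2$ is an open set disjoint from $O_1$, it is also disjoint from $\overline{O_1}$. But $\overline{O_1}$ is an open neighbourhood of $x$, and $x\in\overline{O_2}$, so $\overline{O_1}$ must meet $O_2$. This is a contradiction, so every point of $X$ is isolated and $X$ is discrete. A compact discrete space is finite, which finishes the argument.

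The only step needing care is the choice of radii. The balls $G_n$ must be pairwise disjoint, and every point of $G_n$ must lie within $2d(x_n,x)$ of $x$, so that each $G_n$ shrinks toward $x$. Neither condition is affected by taking the radii smaller, so there is no real obstruction.
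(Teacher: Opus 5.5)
Your proof is correct and follows the same route the paper indicates: metrizability plus extremal disconnectedness forces discreteness, and a compact discrete space is finite. The paper only cites this as a standard fact, and your even/odd splitting (disjoint open sets in an extremally disconnected space have disjoint closures) supplies the details; note that only the pairwise disjointness of the $G_n$ and $x_n\to x$ are used, so the conditions $x\notin\overline{G_n}$ and that $G_n$ shrinks toward $x$ are superfluous.
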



\begin{corollary}\label{cor:local-baer-discrete}
Suppose every compact open subset of $\units$ is metrizable; this holds, in particular, when $\G$ is second countable.  If $A_{\K}(\G)$ satisfies any one of the four local Baer conditions in \Cref{thm:local-baer-stonean}, then $\G$ is discrete.
\end{corollary}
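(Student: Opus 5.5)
The plan is to combine \Cref{thm:local-baer-stonean} with \Cref{lem:metric-stonean-finite} to show that $\units$ is discrete, and then transfer discreteness to $\G$ using the étale structure. First I will justify the parenthetical remark. If $\G$ is second countable, then so is the subspace $\units$, and hence so is every compact open $U\subseteq\units$. A compact Hausdorff second countable space is metrizable by Urysohn's metrization theorem. So the standing hypothesis holds in that case.

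Now assume that one of the four local Baer conditions holds. By \Cref{thm:local-baer-stonean}, every compact open $U\subseteq\units$ is extremally disconnected. By hypothesis it is also compact and metrizable, so \Cref{lem:metric-stonean-finite} shows that $U$ is finite. Let $x\in\units$. Since $\units$ is locally compact, Hausdorff and zero-dimensional, $x$ has a compact open neighbourhood $U$ in $\units$. This $U$ is finite and Hausdorff, hence discrete, so $\{x\}$ is open in $U$. Because $U$ is open in $\units$, the set $\{x\}$ is open in $\units$. Therefore $\units$ is discrete.

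Finally, I pass from $\units$ to $\G$. Let $\gamma\in\G$ and choose an open bisection $C\ni\gamma$; one exists since compact open bisections form a basis. The restriction $\mathbf{r}|_C\colon C\to\mathbf{r}(C)$ is a homeomorphism onto an open subset of $\units$. The set $\{\mathbf{r}(\gamma)\}$ is open in $\units$ and hence open in $\mathbf{r}(C)$. Its preimage under this homeomorphism is $\{\gamma\}$, which is therefore open in $C$ and hence open in $\G$. Thus every point of $\G$ is open, and $\G$ is discrete.

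None of these steps is a real obstacle, since the substantive content sits in \Cref{thm:local-baer-stonean} and \Cref{lem:metric-stonean-finite}. The only points that need care are in the last step. The bisection $C$ must be open, which it is because compact open bisections are open. Openness of $\{\gamma\}$ must be deduced inside $C$ and then in $\G$, and this works because $C$ is open in $\G$.
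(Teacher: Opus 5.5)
Your proposal is correct and follows essentially the same route as the paper: combine \Cref{thm:local-baer-stonean} with \Cref{lem:metric-stonean-finite} to get that compact open subsets of $\units$ are finite, so $\units$ is discrete, and then use an open bisection through $\gamma$ to show that $\{\gamma\}$ is open (you use $\mathbf{r}$ where the paper uses $\mathbf{s}$, which makes no difference). Your Urysohn argument for the second-countable case is a small, correct addition that the paper leaves implicit.
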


\begin{proof}
By \Cref{thm:local-baer-stonean,lem:metric-stonean-finite}, every compact open subset of $\units$ is finite.  Since such sets form a basis, $\units$ is discrete.  If $\gamma\in\G$, choose an open bisection $B$ containing $\gamma$.  The source $\mathbf{s}(B)$ is discrete, so restricting $B$ to the singleton $\{\mathbf{s}(\gamma)\}$ shows that $\{\gamma\}$ is open.  Thus $\G$ is discrete.
\end{proof}

The metrizability hypothesis in \Cref{cor:local-baer-discrete} is necessary, as the following example shows.

\begin{example}\label{rem:metric-essential}
Let $X$ be an infinite Stonean compact space, for example the Stone--\v{C}ech compactification $\beta\mathbb N$, and regard $X$ as a unit groupoid. Then $A_{\K}(X)$ is the algebra of locally constant $\K$-valued functions on $X$.  For a set $S$ of such functions, if $O$ is the union of their supports, then
\[
  r_{A_{\K}(X)}(S)=\one_{X\setminus\overline O}A_{\K}(X).
\]
Since $X$ is extremally disconnected, $X\setminus\overline O$ is clopen.  Hence $A_{\K}(X)$ is Baer although $X$ need not be discrete.  
\end{example}

\section{Discrete groupoids and orbit-matrix decompositions}\label{sec:discrete}

For a set $I$ and a ring $R$, let $M_I^{\fin}(R)$ denote the ring of $I\times I$ matrices over $R$ having only finitely many nonzero entries.  It is unital exactly when $I$ is finite.

\begin{theorem}[Orbit-matrix decomposition]\label{thm:orbit-matrix}
Let $\G$ be a discrete groupoid and let $\K$ be a field.  For each orbit $\cO\in\units/\G$, choose a base point $u_{\cO}\in\cO$ and, for each $v\in\cO$, an arrow $\tau_v:u_{\cO}\to v$, with $\tau_{u_{\cO}}=u_{\cO}$.  Put $H_{\cO}=\G_{u_{\cO}}^{u_{\cO}}$.  Then there is an algebra isomorphism
\begin{equation}\label{eq:orbit-decomp}
 \Phi:A_{\K}(\G)
 \stackrel{\cong}{\longrightarrow}
 \bigoplus_{\cO\in\units/\G}M_{\cO}^{\fin}(\K H_{\cO})
\end{equation}
determined by
\begin{equation}\label{eq:orbit-map}
 \Phi\bigl(\delta_{\tau_vh\tau_w^{-1}}\bigr)=E_{v,w}h
 \qquad(v,w\in\cO,\ h\in H_{\cO}).
\end{equation}
If $\K$ has an involution and each group algebra has the canonical involution, then $\Phi$ is a $*$-isomorphism.

If $c:\G\to\Gamma$ is a cocycle, define $\gamma_v=c(\tau_v)$ and grade $\K H_{\cO}$ by $\deg(h)=c(h)$.  With the shifted matrix grading
\[
 \deg(E_{v,w}h)=\gamma_v+c(h)-\gamma_w,
\]
$\Phi$ is a graded $*$-isomorphism.
\end{theorem}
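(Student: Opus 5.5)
Since $\G$ is discrete, $A_{\K}(\G)$ is the groupoid algebra $\K\G$. It has basis $\{\delta_g~|~g\in\G\}$ with $\delta_g*\delta_{g'}=\delta_{gg'}$ when $\mathbf{s}(g)=\mathbf{r}(g')$ and $0$ otherwise. So the plan is to define $\Phi$ on this basis and verify everything on basis elements.

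\textbf{Unique factorization.} Fix $g\in\G$ with $\mathbf{r}(g)=v$ and $\mathbf{s}(g)=w$. Then $v,w$ lie in a common orbit $\cO$. The element $h:=\tau_v^{-1}g\tau_w$ is a loop at $u_{\cO}$, so $h\in H_{\cO}$, and $g=\tau_vh\tau_w^{-1}$. Conversely, $v$, $w$ and $h$ are recovered from $g$ as $\mathbf{r}(g)$, $\mathbf{s}(g)$ and $\tau_v^{-1}g\tau_w$. Hence $(v,w,h)\mapsto\tau_vh\tau_w^{-1}$ is a bijection from $\bigsqcup_{\cO}\cO\times\cO\times H_{\cO}$ onto $\G$. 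The elements $E_{v,w}h$, with $h$ viewed in $\K H_{\cO}$, form a $\K$-basis of $\bigoplus_{\cO}M^{\fin}_{\cO}(\K H_{\cO})$: finitely supported matrices with finitely supported group-algebra entries, and a direct sum over orbits. Therefore \eqref{eq:orbit-map}, extended linearly, is a linear isomorphism.

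\textbf{Multiplicativity.} Take $g=\tau_vh\tau_w^{-1}$ and $g'=\tau_{v'}h'\tau_{w'}^{-1}$. If $w=v'$, then
\[
gg'=\tau_vh\tau_w^{-1}\tau_wh'\tau_{w'}^{-1}=\tau_v(hh')\tau_{w'}^{-1},
\]
which matches $E_{v,w}h\cdot E_{w,w'}h'=E_{v,w'}hh'$. If $w\neq v'$, both products are $0$; this includes the case where the two elements lie in different orbits, since distinct summands multiply to zero.

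\textbf{Involution.} On basis elements, $\delta_g^*=\delta_{g^{-1}}$, and
\[
g^{-1}=\tau_wh^{-1}\tau_v^{-1}\ \longmapsto\ E_{w,v}h^{-1}=(E_{v,w}h)^*
\]
under the conjugate-transpose involution with $h^*=h^{-1}$ in $\K H_{\cO}$. Both involutions are conjugate-linear, so $\Phi$ intertwines them. Here we use $\tau_{u_{\cO}}=u_{\cO}$ only for normalization, so that $E_{u,u}$ recovers $H_{\cO}$ itself.

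\textbf{Grading.} The cocycle identity gives
\[
c(g)=c(\tau_v)+c(h)-c(\tau_w)=\gamma_v+c(h)-\gamma_w,
\]
since $c(\tau_w^{-1})=-c(\tau_w)$. Thus $\Phi$ sends each homogeneous basis element to one of the same degree. The shifted matrix grading is a genuine ring grading, because the degrees add under multiplication: the $\gamma_w$ terms cancel. It is also a $*$-grading, because the degree of $E_{w,v}h^{-1}$ is the negative. Since $\Phi$ is bijective on homogeneous bases, it is a graded $*$-isomorphism.

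The main obstacle is only the bookkeeping in the unique factorization step: checking that $h$ is well defined in $H_{\cO}$ and that the basis correspondence is exact, including the finiteness of matrix supports. The remaining checks are routine.
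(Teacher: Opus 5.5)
Your proposal is correct and follows the paper's proof essentially step for step: the unique factorization $g=\tau_v h\tau_w^{-1}$ with $h=\tau_v^{-1}g\tau_w\in H_{\cO}$, the resulting bijection of point-mass bases, multiplicativity from composability, inversion matching the conjugate-transpose, and additivity of the cocycle. Your extra checks, that the shifted matrix grading is a genuine graded $*$-grading and that $\Phi$ respects the conjugate-linear action of the involution on coefficients, are details the paper leaves implicit.
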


\begin{proof}
Every arrow $\alpha:w\to v$ in the reduction $\G|_{\cO}$ has a unique expression
\[
  \alpha=\tau_vh\tau_w^{-1},
  \qquad
  h=\tau_v^{-1}\alpha\tau_w\in H_{\cO}.
\]
The point masses form a basis of the Steinberg algebra because $\G$ is discrete and every compact set is finite.  Formula \eqref{eq:orbit-map} therefore defines a linear bijection.  Uniqueness of the expression and the multiplication rule for groupoid arrows show that it preserves products.  Inversion sends
\[
 (\tau_vh\tau_w^{-1})^{-1}=\tau_wh^{-1}\tau_v^{-1}
\]
to the conjugate-transpose of $E_{v,w}h$, proving the $*$-statement.  Finally,
\[
 c(\tau_vh\tau_w^{-1})=c(\tau_v)+c(h)-c(\tau_w),
\]
which proves gradedness.
\end{proof}

\begin{theorem}\label{thm:discrete-local-criterion}
Use the notation of \Cref{thm:orbit-matrix}.  Let $\cP$ be any one of the properties Rickart, Baer, Rickart $*$, or Baer $*$, and use homogeneous versions throughout when a cocycle is fixed.  Then $A_{\K}(\G)$ is locally $\cP$ if and only if
\begin{equation}\label{eq:finite-matrix-criterion}
 M_F(\K H_{\cO})\text{ has }\cP
\end{equation}
for every orbit $\cO$ and every finite nonempty subset $F\subseteq\cO$, with the grading and involution inherited from \Cref{thm:orbit-matrix}.

If $\units$ is finite, then $A_{\K}(\G)$ is unital and has $\cP$ if and only if every full orbit block $M_{\cO}(\K H_{\cO})$ has $\cP$.
\end{theorem}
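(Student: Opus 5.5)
The plan is to transport everything through the isomorphism $\Phi$ of \Cref{thm:orbit-matrix} and then reduce local conditions to a cofinal family of corners. Since $\Phi$ is a (graded) $*$-isomorphism, $A_{\K}(\G)$ is locally $\cP$ if and only if $R=\bigoplus_{\cO}M^{\fin}_{\cO}(\K H_{\cO})$ is. The first step is to identify a convenient set of local units. For a finite set $S\subseteq\units$, the element $\one_S$ is a homogeneous projection of degree $0$: it is a finite sum of point masses at units, and $c$ vanishes on units. Under $\Phi$ it becomes the diagonal idempotent $\sum_{\cO}\sum_{v\in S\cap\cO}E_{v,v}$. Hence
\[
\Phi\bigl(\one_S A_{\K}(\G)\one_S\bigr)=\prod_{\cO\,:\,S\cap\cO\neq\emptyset}M_{S\cap\cO}(\K H_{\cO}),
\]
a finite product carrying the inherited shifted grading and conjugate-transpose involution. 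Every compactly supported function is supported on finitely many arrows, so these $\one_S$ form a set of homogeneous local $*$-units.

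For the ``only if'' direction, assume $A_{\K}(\G)$ is locally $\cP$. Given an orbit $\cO$ and a finite nonempty $F\subseteq\cO$, take $S=F$. The corner $\one_FA_{\K}(\G)\one_F$ is then isomorphic to $M_F(\K H_{\cO})$. Since $\one_F$ is a homogeneous projection, it is an admissible corner for every version of $\cP$ (ordinary, graded, and $*$), so $M_F(\K H_{\cO})$ has $\cP$.

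For the ``if'' direction, I would invoke \Cref{lem:standard-ring-facts}\ref{lem2.3_4}. First, every corner $\one_SA_{\K}(\G)\one_S$ is a finite product of rings $M_{S\cap\cO}(\K H_{\cO})$, each of which has $\cP$ by hypothesis, so the corner has $\cP$. Next, I must pass from these distinguished corners to arbitrary corners $eRe$, where $e$ is a (homogeneous) idempotent or projection. The element $e$ has finite support, so there is a finite $S$ with $\one_Se=e=e\one_S$; for instance, take $S$ to contain the ranges and sources of all arrows in $\supp(e)$. Then $eRe=e(\one_SR\one_S)e$ is a corner of a ring with $\cP$. Because $e$ is of the appropriate type (homogeneous, or a projection), the passage-to-corners statement in \Cref{lem:standard-ring-facts}\ref{lem2.3_4} gives that $eRe$ has $\cP$. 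This is also what the remark ``it is enough to check the condition on $uRu$ for $u\in\cU$'' asserts.

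For the unital statement, if $\units$ is finite then $\one_{\units}$ is an identity, there are finitely many orbits, and each orbit is finite. Applying $\Phi$ gives $A_{\K}(\G)\cong\prod_{\cO}M_{\cO}(\K H_{\cO})$ as graded $*$-rings. One direction uses finite products, again by \Cref{lem:standard-ring-facts}\ref{lem2.3_4}. For the other, each full block is the corner $\one_{\cO}A_{\K}(\G)\one_{\cO}$ cut out by a central homogeneous projection, so it inherits $\cP$.

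I expect the main obstacle to be bookkeeping in the graded setting. One must check that the grading on the corner $\one_FA_{\K}(\G)\one_F$ is exactly the shifted grading $\gamma_v+c(h)-\gamma_w$ on $M_F(\K H_{\cO})$, independent of the rest of the orbit. One must also check that the graded notions pass to corners by homogeneous idempotents, which is where I rely on \Cref{lem:standard-ring-facts}\ref{lem2.3_4} as quoted from Hazrat--Va\v{s}. The first point follows from \Cref{thm:orbit-matrix}, since $\Phi$ is degree-preserving.
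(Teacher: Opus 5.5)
Your proposal is correct and follows essentially the same route as the paper: a corner by a finite sum of unit point masses $\one_S$ is, via $\Phi$, the finite product of the blocks $M_{S\cap\cO}(\K H_{\cO})$, and $\cP$ passes to finite products and to corners by homogeneous idempotents or projections. Your handling of the unital case, using the projections $\one_{\cO}$ for the converse direction, just spells out the paper's one-line appeal to the finite direct-product decomposition.
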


\begin{proof}
For finite $F\subseteq\cO$, the idempotent
\[
 p_F=\sum_{v\in F}\delta_v
\]
is a homogeneous projection and
\[
 p_F A_{\K}(\G)p_F\cong M_F(\K H_{\cO}).
\]
Thus local $\cP$ implies \eqref{eq:finite-matrix-criterion}.

Conversely, let $e$ be an idempotent, or a homogeneous projection in the relevant graded $*$-case.  Its support is finite.  Hence $e\in pA_{\K}(\G)p$ where $p$ is a finite sum of unit point masses, involving only finitely many orbits and finitely many units in each orbit.  By \eqref{eq:orbit-decomp}, this corner is a finite direct product of rings of the form \eqref{eq:finite-matrix-criterion}.  The property $\cP$ passes to finite products and corners, so $eA_{\K}(\G)e$ has $\cP$.  This proves the local assertion.  The unital statement follows directly from the finite direct-product decomposition.
\end{proof}

Combining the topological obstruction with the matrix criterion gives the following. 

\begin{corollary}\label{cor:general-reduction}
Assume every compact open subset of $\units$ is metrizable.  Then $A_{\K}(\G)$ is locally Baer if and only if
\begin{enumerate}
\item $\G$ is discrete; and
\item $M_F(\K\G_u^u)$ is Baer for every $u\in\units$ and every finite subset $F$ of $\G u$. 
\end{enumerate}
Analogous statements hold for graded local Baer, local Baer $*$, and graded local Baer $*$, with the corresponding matrix property.
\end{corollary}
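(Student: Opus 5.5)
The argument combines \Cref{cor:local-baer-discrete} with \Cref{thm:discrete-local-criterion}. Everything is done for the ungraded locally Baer property; the three other variants follow the same steps word for word.

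\emph{Forward direction.} Assume $A_{\K}(\G)$ is locally Baer. Every compact open subset of $\units$ is metrizable, so \Cref{cor:local-baer-discrete} shows that $\G$ is discrete, which is condition (1). Next, let $u\in\units$ and let $F$ be a finite subset of $\G u$. Write $\cO=\G u$ and apply \Cref{thm:orbit-matrix} with base point $u_{\cO}=u$. We may choose $u$ itself as the base point because the decomposition allows any choice of base point. Then $H_{\cO}=\G_u^u$. If $F$ is empty the condition is vacuous, or else interpreted trivially. If $F$ is nonempty, \Cref{thm:discrete-local-criterion} gives that $M_F(\K\G_u^u)$ is Baer. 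This is condition (2).

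There is a small bookkeeping point here. \Cref{thm:discrete-local-criterion} is stated for one fixed choice of base points, while (2) refers to every $u$ in the orbit. Changing the base point inside an orbit conjugates the isotropy group: an arrow $\tau:u\to v$ gives $\G_v^v=\tau\G_u^u\tau^{-1}$. So the group algebras are isomorphic, and the resulting isomorphism respects the involution. In the graded case it shifts degrees by $c(\tau)-c(\tau^{-1})=0$, so degrees are preserved. Hence the matrix condition does not depend on the base point. The corner computation $p_FA_{\K}(\G)p_F\cong M_F(\K\G_u^u)$ also works directly for any base point, so we never actually need to fix a global one.

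\emph{Reverse direction.} Assume (1) and (2). Since $\G$ is discrete, \Cref{thm:orbit-matrix} and \Cref{thm:discrete-local-criterion} apply. For each orbit $\cO$ with its chosen base point $u_{\cO}$ and each finite nonempty $F\subseteq\cO$, condition (2) with $u=u_{\cO}$ says that $M_F(\K H_{\cO})$ is Baer. By \Cref{thm:discrete-local-criterion}, $A_{\K}(\G)$ is then locally Baer.

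\emph{Other variants and the main obstacle.} For graded local Baer, local Baer $*$ and graded local Baer $*$, \Cref{cor:local-baer-discrete} still gives discreteness. \Cref{thm:discrete-local-criterion} gives the matrix condition with the involution and shifted grading inherited from \Cref{thm:orbit-matrix}. The only step that needs care is the base-point independence discussed above. The shifted grading $\gamma_v+c(h)-\gamma_w$ involves the choice of the arrows $\tau_v$, but in every case the condition is checked on the actual corner $p_FA_{\K}(\G)p_F$, so that choice does not matter.
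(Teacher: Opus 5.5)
Your proposal is correct and follows the paper's proof: discreteness comes from \Cref{cor:local-baer-discrete}, and the matrix condition comes from \Cref{thm:discrete-local-criterion}, with your base-point bookkeeping just making explicit a point the paper leaves implicit. One small slip: conjugation by $\tau$ preserves degrees because $c(\tau)+c(\tau^{-1})=0$, not $c(\tau)-c(\tau^{-1})=0$ (the latter equals $2c(\tau)$).
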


\begin{proof}
Necessity of discreteness is \Cref{cor:local-baer-discrete}; the rest is \Cref{thm:discrete-local-criterion}.
\end{proof}

\section{Cyclic-isotropy Steinberg algebras}\label{sec:cyclic}

We now obtain explicit characterizations for the isotropy pattern occurring in graph groupoids.

\begin{definition}
A groupoid $\G$ has \emph{cyclic isotropy} if every isotropy group is either trivial or infinite cyclic.  A cocycle $c:\G\to\Z$ is \emph{isotropy-faithful} if its restriction to each isotropy group is injective.
\end{definition}


\begin{lemma}[\cite{HazratVas2018}, Lemma 14]\label{lem:matrix-laurent}
Let $\K$ be a field and let $n\geq1$.
\begin{enumerate}
\item\label{lem6.2_1} The rings $M_n(\K)$ and $M_n(\K[t,t^{-1}])$ are Baer.
\item\label{lem6.2_2} Give $\K$ the trivial $\Z$-grading and give $\K[t^d,t^{-d}]$ its natural grading, where $d\geq1$.  Every shifted matrix grading on
\[
 M_n(\K)
 \quad\text{and}\quad
 M_n(\K[t^d,t^{-d}])
\]
is graded regular.  If $\K$ is positive definite, these rings are graded Baer~$*$.
\item\label{lem6.2_3} If $\K$ is positive definite, $M_n(\K)$  and $\K[t,t^{-1}]$ are Baer~$*$.
\item\label{lem6.2_4} The ring $M_n(\K[t,t^{-1}])$ is Rickart $*$ if and only if $n=1$.
\end{enumerate}
\end{lemma}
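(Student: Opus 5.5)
The four items are quoted from \cite{HazratVas2018}. I would still check each one with standard ring theory, in the order (1), (3), (2), (4).

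For \ref{lem6.2_1}, I would use two facts. First, $\K$ and $\K[t,t^{-1}]$ are commutative principal ideal domains. Second, a matrix ring $M_n(D)$ over a right and left Ore domain $D$ (in particular a PID) is Baer. The cleanest route is the following. $M_n(D)$ embeds in $M_n(Q)$, where $Q$ is the fraction field, and every right annihilator in $M_n(D)$ has the form $r_{M_n(Q)}(X)\cap M_n(D)$. Since $M_n(Q)$ is semisimple, $r_{M_n(Q)}(X)$ consists of the matrices whose columns lie in a $Q$-subspace $L\subseteq Q^n$. The intersection $L\cap D^n$ is a pure submodule of the free module $D^n$ over a PID, so it is a direct summand. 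An idempotent $e\in M_n(D)$ projecting $D^n$ onto this summand then generates the annihilator. In this step one has to check that the columns of $e$ span $L\cap D^n$.

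For \ref{lem6.2_3}, I would show that $M_n(\K)$ is a regular ring with a positive definite involution; the conjugate transpose is positive definite because $\K$ is. A regular $*$-ring with a proper involution is Rickart $*$, and since it is also Baer by \ref{lem6.2_1}, its annihilator idempotents can be replaced by the range projections. For $\K[t,t^{-1}]$, which is a domain, the only idempotents are $0$ and $1$, and both are projections.

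For \ref{lem6.2_2}, I would use graded-division-ring techniques. The ring $\K[t^d,t^{-d}]$ is a graded field, since its nonzero homogeneous elements $kt^{dm}$ are invertible. A shifted matrix ring over a graded field is graded-isomorphic to the graded endomorphism ring of a finitely generated graded free module. That endomorphism ring is graded regular, because every graded submodule of such a module is a graded direct summand. Graded Baer $*$ then follows by the same argument as in the previous step, applied to homogeneous annihilators. The point is that graded annihilators correspond to graded subspaces, and a graded subspace has an orthogonal complement that is again graded because the inner product is homogeneous and positive definite on each component.

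For \ref{lem6.2_4}, the direction $n=1$ is \ref{lem6.2_3}. For $n=2$, I would produce an explicit element whose annihilator is not generated by a projection. I would try $x=\begin{pmatrix}1&t\\0&0\end{pmatrix}$. Its right annihilator consists of the matrices whose columns lie in $D(-t,1)^T$. A projection generating it would have to be $p=vv^*/(v^*v)$ with $v=(-t,1)^T$. Here $v^*v=t^{-1}t+1=2$, so this $p$ has Laurent polynomial entries when $2$ is invertible, and $x$ fails. I would therefore pick $v=(1+t,1)^T$ instead, which gives $v^*v=3+t+t^{-1}$. This is not a unit in $\K[t,t^{-1}]$, so no projection lies in $M_2(\K[t,t^{-1}])$. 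For $n>2$, I would pass to a $2\times2$ corner using \Cref{lem:standard-ring-facts}\ref{lem2.3_4}.

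I expect the main obstacle to be the $n=2$ counterexample. The difficulty is proving that no projection generates the annihilator: any such projection must be rank one with image $Dv$, and I would need to show this forces it to equal $vv^*/(v^*v)$. I would argue this by comparing with the corresponding projection over the fraction field, where it is unique.
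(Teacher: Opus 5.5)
The paper gives no proof of this lemma; it quotes it from \cite{HazratVas2018}. Your plan is therefore a self-contained replacement for a citation, and it is correct.

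Parts (1)--(3) reduce to standard facts:
\begin{itemize}
\item a pure submodule of $D^n$ over a PID is a direct summand;
\item a regular ring with a proper involution is $*$-regular;
\item over a graded field, every graded submodule of a finitely generated graded free module is a graded summand.
\end{itemize}
Your orthogonal-complement argument in (2) also goes through. Let $N$ be the graded column module of a homogeneous annihilator and $M_b$ the degree-$b$ component of $D^n$. For $u\in M_a$ and $v\in M_b$ we have $u^*v\in D_{b-a}$, so on each $M_b$ the form is an anisotropic $\K$-valued Hermitian form. For $D=\K[t^d,t^{-d}]$, multiplying by the units $t^{dk}$ shows that $N^\perp\cap M_b$ is the orthogonal complement of $N_b$ inside $M_b$. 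Hence $D^n=N\oplus N^\perp$ as graded modules, and the orthogonal projection onto $N$ is a homogeneous projection.

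The citation buys brevity. Your route buys an explanation of Remark~\ref{rem:two-star-worlds}. A homogeneous vector always has $v^*v\in\K^\times$. The ungraded obstruction is a vector such as $(1+t,1)^T$, with $v^*v=3+t+t^{-1}\notin\K[t,t^{-1}]^\times$.

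Three small corrections.

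\emph{First}, the claim that $M_n(D)$ is Baer over every two-sided Ore domain is false. Take $D=\K[x,y]$. The right annihilator of $\begin{pmatrix}x&y\\0&0\end{pmatrix}$ has column module $D(y,-x)^T$. The quotient $D^2/D(y,-x)^T$ is isomorphic to the ideal $(x,y)$, which is not projective. So this module is not a summand, and $M_2(\K[x,y])$ is not even right Rickart. Your actual argument only uses that $D$ is a PID (finitely generated torsion-free modules are free), and that is all this lemma needs.

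\emph{Second}, part (4) has no positive-definiteness hypothesis, so the direction $n=1$ should not be quoted from (3). Your domain argument (annihilators are $0$ or the whole ring, generated by the projections $0$ and $1$) works for any involution.

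\emph{Third}, name the element in the $n=2$ case. The idempotent $x=\begin{pmatrix}1&-(1+t)\\0&0\end{pmatrix}$ has $r(x)$ equal to the set of matrices with columns in $Dv$, where $v=(1+t,1)^T$. The uniqueness step you flagged then takes two lines and needs no fraction field:
\begin{itemize}
\item Suppose $r(x)=pR$ with $p=p^2=p^*$. Since $p\in r(x)$, we have $p=vc$ for a row vector $c$.
\item The matrix with columns $v,0$ lies in $pR$, so $pv=v$, which forces $cv=1$.
\item From $vc=p=p^*=c^*v^*$ we get $v=v(cv)=c^*(v^*v)$.
\item The second coordinate reads $c_2^{*}\,(3+t+t^{-1})=1$, which is impossible since $3+t+t^{-1}$ is not a unit.
\end{itemize}
This works in every characteristic and for every involution on $\K$, unlike your first choice, which fails whenever $2\in\K^\times$. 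The passage to $n>2$ through the corner by $E_{11}+E_{22}$ is fine as you stated it.
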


\begin{theorem}\label{thm:cyclic-local}
Let $\G$ be a Hausdorff ample groupoid with cyclic isotropy, and assume every compact open subset of $\units$ is metrizable.
\begin{enumerate}
\item\label{theo6.3_1}  The following are equivalent:
\begin{enumerate}[label=\textup{(\roman*)}]
\item$\G$ is discrete;
\item $A_{\K}(\G)$ is locally Baer;
\item $A_{\K}(\G)$ is graded locally Baer for any cocycle grading.
\end{enumerate}
\item\label{theo6.3_2}  Suppose $\K$ is positive definite and $c:\G\to\Z$ is isotropy-faithful.  Then the conditions in \ref{theo6.3_1}  are also equivalent to $A_{\K}(\G)$ being graded locally Baer $*$ 
for the cocycle grading.
\item\label{theo6.3_3}  Suppose $\K$ is positive definite.  Then $A_{\K}(\G)$ is locally Baer $*$ if and only if $\G$ is discrete and every orbit containing a unit with nontrivial isotropy is a singleton.
\end{enumerate}
\end{theorem}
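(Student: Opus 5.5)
The plan is to reduce everything to \Cref{cor:general-reduction} and \Cref{thm:discrete-local-criterion}, and then use \Cref{lem:matrix-laurent} to decide the resulting matrix conditions. Under the standing metrizability hypothesis, \Cref{cor:local-baer-discrete} shows that any of the four local Baer conditions forces $\G$ to be discrete. So in each part the necessity of discreteness is already available, and it remains to analyse discrete groupoids with cyclic isotropy. Once $\G$ is discrete, \Cref{thm:orbit-matrix} applies. For each orbit $\cO$ the coefficient ring $\K H_{\cO}$ is either $\K$ (trivial isotropy) or $\K[t,t^{-1}]$ (infinite cyclic isotropy, with $t$ a generator of $H_{\cO}$). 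By \Cref{thm:discrete-local-criterion}, each local property reduces to the corresponding property of $M_F(\K H_{\cO})$ for finite nonempty $F\subseteq\cO$.

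For \ref{theo6.3_1}: (ii)$\Rightarrow$(i) and (iii)$\Rightarrow$(i) are \Cref{cor:local-baer-discrete}. For (i)$\Rightarrow$(ii), \Cref{lem:matrix-laurent}\ref{lem6.2_1} shows that every $M_F(\K)$ and $M_F(\K[t,t^{-1}])$ is Baer, and \Cref{thm:discrete-local-criterion} gives local Baer. For (i)$\Rightarrow$(iii), take a homogeneous idempotent $e$. Its support is finite, so $e$ lies in a corner $p_FA_{\K}(\G)p_F$ whose identity $p_F$ is homogeneous of degree zero. That corner is Baer, hence graded Baer by \Cref{lem:standard-ring-facts}\ref{lem2.3_3}. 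The corner $eA_{\K}(\G)e$ of it by a homogeneous idempotent is then graded Baer by \Cref{lem:standard-ring-facts}\ref{lem2.3_4}. This route avoids worrying about arbitrary cocycle gradings on the blocks.

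For \ref{theo6.3_2}: graded local Baer $*$ implies discreteness by \Cref{cor:local-baer-discrete}. Conversely, if $\G$ is discrete, the grading on $\K H_{\cO}$ is $\deg(h)=c(h)$. When $H_{\cO}\cong\Z$ is generated by $h_0$, isotropy-faithfulness gives $c(h_0)=\pm d$ with $d\ge1$. Replacing $h_0$ by $h_0^{-1}$ if necessary, $\K H_{\cO}\cong\K[t^d,t^{-d}]$ as a graded $*$-ring, with $t$ placed in degree $1$, and the involution $t\mapsto t^{-1}$ matches $h\mapsto h^{-1}$. For trivial isotropy the coefficient ring is $\K$ with the trivial grading. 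In both cases \Cref{thm:orbit-matrix} identifies $M_F(\K H_{\cO})$ with a shifted matrix grading. \Cref{lem:matrix-laurent}\ref{lem6.2_2} then makes it graded Baer $*$, and \Cref{thm:discrete-local-criterion} finishes. The step to check carefully is that these shifts are exactly of the form covered by \Cref{lem:matrix-laurent}; isotropy-faithfulness is used precisely to exclude $d=0$, i.e. a trivially graded Laurent ring.

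For \ref{theo6.3_3}: necessity of discreteness is again \Cref{cor:local-baer-discrete}. Suppose $\cO$ has nontrivial isotropy and contains two units $v\neq w$. Take $F=\{v,w\}$; then the corner $p_FA_{\K}(\G)p_F\cong M_2(\K[t,t^{-1}])$ is not Rickart $*$ by \Cref{lem:matrix-laurent}\ref{lem6.2_4}, hence not Baer $*$. This contradicts local Baer $*$ via \Cref{thm:discrete-local-criterion}. Conversely, suppose every orbit with nontrivial isotropy is a singleton. Then the blocks are $M_F(\K)$ or $\K[t,t^{-1}]$ with $|F|=1$, all Baer $*$ by \Cref{lem:matrix-laurent}\ref{lem6.2_3}. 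The $*$-compatibility of $\Phi$ in \Cref{thm:orbit-matrix} ensures the involutions match, and \Cref{thm:discrete-local-criterion} gives local Baer $*$. I expect the only genuine obstacle to be the grading bookkeeping in \ref{theo6.3_2}; the rest is direct assembly of earlier results.
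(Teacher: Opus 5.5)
Your proposal is correct and follows the paper's proof essentially step for step. Discreteness comes from \Cref{cor:local-baer-discrete}, the reduction to the finite blocks $M_F(\K)$ and $M_F(\K[t,t^{-1}])$ comes from \Cref{thm:orbit-matrix} and \Cref{thm:discrete-local-criterion}, each block is handled by \Cref{lem:matrix-laurent}, and the $M_2(\K[t,t^{-1}])$ corner is the obstruction in part (3). Your detours through the degree-zero corner $p_F$ for (i)$\Rightarrow$(iii), and your normalization of the sign of $c(h_0)$ in part (2), only spell out steps the paper compresses into a single line each.
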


\begin{proof}
Any of the algebraic conditions in \ref{theo6.3_1} or \ref{theo6.3_2} forces discreteness by \Cref{cor:local-baer-discrete}.  Conversely, assume $\G$ is discrete.  By \Cref{thm:orbit-matrix}, every finite local-unit corner is a finite direct product of rings of the form
\[
 M_n(\K)
 \quad\text{or}\quad
 M_n(\K[t,t^{-1}]).
\]
These rings are Baer by \Cref{lem:matrix-laurent}\ref{lem6.2_1}, so \Cref{thm:discrete-local-criterion} gives local Baer.  An ordinary Baer ring is graded Baer, giving \ref{theo6.3_1}.

For \ref{theo6.3_2}, isotropy-faithfulness identifies the graded isotropy algebra of a nontrivial isotropy group with $\K[t^d,t^{-d}]$ for some $d\geq1$.  The orbit decomposition carries the grading to a shifted matrix grading.  Apply \Cref{lem:matrix-laurent}\ref{lem6.2_2} to every finite block and then \Cref{thm:discrete-local-criterion}.

For \ref{theo6.3_3}, assume first that $A_{\K}(\G)$ is locally Baer $*$.  It is locally Baer, so $\G$ is discrete.  Let $\cO$ be an orbit with nontrivial isotropy.  If $\cO$ contained two distinct units, a two-by-two local-unit corner would be $*$-isomorphic to $M_2(\K[t,t^{-1}])$, contradicting \Cref{lem:matrix-laurent}\ref{lem6.2_4}. Thus $\cO$ is a singleton.

Conversely, assume the stated groupoid conditions.  A finite matrix corner over a trivial-isotropy orbit is $M_n(\K)$ and is Baer $*$ by \Cref{lem:matrix-laurent}\ref{lem6.2_3}.  A nontrivial-isotropy orbit is a singleton, so its only orbit block is $\K[t,t^{-1}]$, also Baer $*$.  The discrete local criterion completes the proof.
\end{proof}

\begin{corollary}\label{cor:cyclic-unital}
Under the hypotheses of \Cref{thm:cyclic-local}, the following are equivalent:
\begin{enumerate}
\item $\units$ is finite;
\item $A_{\K}(\G)$ is Baer;
\item $A_{\K}(\G)$ is graded Baer for any cocycle grading.
\end{enumerate}
If $\K$ is positive definite and $c:\G\to\Z$ is isotropy-faithful, these are also equivalent to $A_{\K}(\G)$ being graded Baer $*$.

Moreover, for positive definite $\K$, the algebra $A_{\K}(\G)$ is Baer $*$ if and only if $\units$ is finite and every orbit with nontrivial isotropy is a singleton.
\end{corollary}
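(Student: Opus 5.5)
The plan is to reduce the unital statements to \Cref{thm:cyclic-local} using two facts. First, a unital ring has $\cP$ exactly when it is locally $\cP$ and unital. Second, $A_{\K}(\G)$ is unital exactly when $\units$ is compact.

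For (1)$\Rightarrow$(2) and (1)$\Rightarrow$(3), finiteness of $\units$ gives compactness, so $A_{\K}(\G)$ is unital with identity $\one_{\units}$. Moreover, every compact open subset of $\units$ is finite, so the discreteness criterion of \Cref{thm:cyclic-local} applies once we know $\G$ itself is discrete. This holds by the argument at the end of \Cref{cor:local-baer-discrete}, since $\units$ is discrete. Then \Cref{thm:cyclic-local}\ref{theo6.3_1} gives that $A_{\K}(\G)$ is locally Baer and graded locally Baer. Taking the corner by the (homogeneous) projection $\one_{\units}$, whose corner is the whole algebra, shows that $A_{\K}(\G)$ is Baer and graded Baer. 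Alternatively, one can invoke the unital clause of \Cref{thm:discrete-local-criterion} with the blocks $M_n(\K)$ and $M_n(\K[t,t^{-1}])$ from \Cref{lem:matrix-laurent}.

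For (2)$\Rightarrow$(1) and (3)$\Rightarrow$(1), a Baer or graded Baer ring is unital, as noted in \Cref{sec:prelim}. A unital ring that is (graded) Baer is (graded) locally Baer by \Cref{lem:standard-ring-facts}\ref{lem2.3_4}. By \Cref{cor:local-baer-discrete}, $\G$ is discrete. Unitality of $A_{\K}(\G)$ forces $\units$ to be compact. A compact discrete space is finite, which gives (1).

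The graded Baer $*$ equivalence is handled the same way using \Cref{thm:cyclic-local}\ref{theo6.3_2}. In the forward direction, Baer $*$ implies Baer, which yields discreteness and then finiteness. In the reverse direction, finite $\units$ gives discreteness, the algebra is unital, and it is graded locally Baer $*$, hence graded Baer $*$. For the final Baer $*$ statement, use \Cref{thm:cyclic-local}\ref{theo6.3_3}. If $A_{\K}(\G)$ is Baer $*$, it is unital, so $\units$ is compact. It is also locally Baer $*$, so $\G$ is discrete and every orbit with nontrivial isotropy is a singleton; compactness together with discreteness makes $\units$ finite. Conversely, under these conditions $\G$ is discrete, and \Cref{thm:cyclic-local}\ref{theo6.3_3} gives locally Baer $*$. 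Taking the corner by the identity projection $\one_{\units}$ gives Baer $*$.

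The only delicate step is the passage from ``Baer'' to ``locally Baer'' in the unital case. This requires checking that every corner $eRe$ of a unital Baer ring is Baer, and likewise in the graded and $*$ settings. That is exactly the corner clause of \Cref{lem:standard-ring-facts}\ref{lem2.3_4}, so I expect no genuine obstacle. The remaining points to verify are that unitality really forces compactness of $\units$, and that the identity of the algebra is the homogeneous projection $\one_{\units}$, both of which are recorded in \Cref{sec:prelim}.
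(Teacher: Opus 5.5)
Your proposal is correct and follows essentially the same route as the paper. Unitality gives compactness of $\units$, the local Baer-type condition (obtained from corners) forces discreteness via \Cref{cor:local-baer-discrete}, and compact plus discrete gives finiteness; conversely, finite $\units$ makes $\G$ discrete, and the blocks are handled by \Cref{lem:matrix-laurent}. The only cosmetic difference is that you pass through the local statements of \Cref{thm:cyclic-local} and then take the corner at $\one_{\units}$, whereas the paper applies the finite orbit decomposition directly, which amounts to the same argument.
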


\begin{proof}
A Baer, graded Baer, or graded Baer $*$ ring is unital; hence $\units$ is compact.  The corresponding local condition forces $\G$ to be discrete by \Cref{thm:cyclic-local}, so compactness of $\units$ makes it finite.  Conversely, if $\units$ is finite, etaleness implies that $\G$ is discrete.  The orbit decomposition is then a finite product of finite matrix blocks, and \Cref{lem:matrix-laurent} applies.  The final assertion follows from \Cref{thm:cyclic-local}\ref{theo6.3_3}.
\end{proof}

\begin{remark}\label{rem:two-star-worlds}
The distinction between graded and ungraded Baer $*$ is now visible at the groupoid level.  A transitive discrete groupoid with isotropy $\Z$ and an orbit of size $n>1$ contributes the block
\[
 M_n(\K[t,t^{-1}]).
\]
With the isotropy-faithful grading this is graded Baer $*$, but without the grading it is not even Rickart $*$.  The grading separates Laurent degrees and removes the obstruction created by the idempotent used in \Cref{lem:matrix-laurent}\ref{lem6.2_4}.
\end{remark}

\section{boundary path groupoids and Leavitt path algebras}\label{sec:graphs}

We now specialize the preceding results to directed graphs.  This section recovers the known Leavitt path algebra characterizations and provides a conceptual explanation of them.

\subsection{The graph groupoid model and Leavitt path algebras}

Let $E=(E^0,E^1,\mathbf{r,s})$ be a directed graph. Vertices are regarded as paths of length zero.  For a finite path $\alpha=e_1\cdots e_n$, write
\[
|\alpha|=n,\qquad
\alpha^*=e_n^*\cdots e_1^*.
\]
 A vertex is \emph{regular} if it emits a finite nonzero number of edges, and is \emph{singular} if it is a sink or an infinite emitter.
 
 Let $\K$ be a field with involution.  The Leavitt path algebra $L_{\K}(E)$ is generated by
 \[
 \{v~|~v\in E^0\}\cup\{e,e^*~|~e\in E^1\}
 \]
 subject to the usual Cuntz--Krieger relations.  It carries the canonical involution extending that of $\K$ and interchanging $e$ and $e^*$, and the canonical $\Z$-grading
 \[
 \deg v=0,\qquad \deg e=1,\qquad \deg e^*=-1.
 \]
 Finite sums of vertices are local projection units.  We use \cite{AbramsArandaPino2005,AbramsAraSilesMolina2017} for standard facts about arbitrary-graph Leavitt path algebras.
 
  The boundary path space is
\[
 \partial E
 =E^\infty\cup\{\mu\in E^*~|~\mathbf{r}(\mu)\text{ is singular}\}.
\]
For a finite path $\mu$ and a finite set $F\subseteq \mathbf{s}^{-1}(\mathbf{r}(\mu))$, put
\[
 Z(\mu\setminus F)
 =Z(\mu)\setminus\bigcup_{e\in F}Z(\mu e).
\]
These sets form a basis of compact open subsets for a locally compact Hausdorff topology on $\partial E$.

The boundary path groupoid is
\[
 \G_E
 =\{(x,m-n,y)~|~x,y\in\partial E,\ \sigma^m(x)=\sigma^n(y)\},
\]
with the usual multiplication and inversion.  It is a Hausdorff ample groupoid with unit space $\partial E$.  The map
\[
 c_E:\G_E\longrightarrow\Z,
 \qquad
 c_E(x,k,y)=k,
\]
is a continuous cocycle.  There is a graded $*$-isomorphism
\begin{equation}\label{eq:lpa-steinberg}
 \Psi_E:L_{\K}(E)\stackrel{\cong}{\longrightarrow}A_{\K}(\G_E),
 \qquad
 \Psi_E(\mu\nu^*)=\one_{Z(\mu,\nu)},
\end{equation}
with the standard interpretation of generalized cylinders for arbitrary graphs (see \cite{ClarkFarthingSimsTomforde2014,Rigby2020}).

Every isotropy group of $\G_E$ is a subgroup of $\Z$ through the cocycle $c_E$.  It is nontrivial precisely at eventually periodic boundary paths.  Hence $\G_E$ has cyclic isotropy and $c_E$ is isotropy-faithful \cite{SteinbergVanWyk2026}.

\subsection{A boundary path rigidity lemma}

The compact open subsets of $\partial E$ need not be metrizable for an arbitrary graph.  Nevertheless, they satisfy a stronger sequential property tailored to \Cref{thm:local-baer-stonean}.

\begin{lemma}\label{lem:boundary-sequence}
Every nonisolated point of $\partial E$ is the limit of a nontrivial sequence of distinct boundary paths.  Consequently, every infinite compact open subset of $\partial E$ contains a nontrivial convergent sequence.
\end{lemma}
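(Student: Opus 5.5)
The plan is to use the explicit cylinder basis $Z(\mu\setminus F)$ and classify what a basic neighbourhood of a boundary path $x\in\partial E$ looks like. I will split into two cases according to whether $x$ is infinite or finite. In each case I will show that either $x$ has a neighbourhood reduced to $\{x\}$, so $x$ is isolated, or there is an explicit sequence of distinct boundary paths converging to $x$. For convergence I only need that the sequence eventually enters every basic set $Z(\mu\setminus F)$ containing $x$.

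\emph{Case 1: $x=e_1e_2\cdots\in E^\infty$.} The sets $Z(e_1\cdots e_n)$, $n\ge0$, shrink to $\{x\}$. A basic neighbourhood $Z(\mu\setminus F)$ of $x$ has $\mu=e_1\cdots e_n$ and $e_{n+1}\notin F$, so it contains $Z(e_1\cdots e_{n+1})$. Hence these cylinders form a neighbourhood base at $x$. Suppose infinitely many of them contain a boundary path $y_n\neq x$ with $y_n\in Z(e_1\cdots e_n)$. Choosing such $y_n$ and discarding repeats gives distinct points converging to $x$; distinctness can be arranged because each $y\neq x$ lies in only finitely many $Z(e_1\cdots e_n)$. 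Otherwise some $Z(e_1\cdots e_n)=\{x\}$ and $x$ is isolated.

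\emph{Case 2: $x=\mu$ is finite with $v=\mathbf r(\mu)$ singular.} If $v$ is a sink, then $Z(\mu)=\{\mu\}$ and $x$ is isolated. If $v$ is an infinite emitter, choose distinct edges $f_1,f_2,\dots\in \mathbf s^{-1}(v)$. For each $k$ pick a boundary path $y_k\in Z(\mu f_k)$; this is possible since any finite path extends to a boundary path, by following edges until hitting a singular vertex or forever. The $y_k$ are distinct and differ from $\mu$. Any basic neighbourhood $Z(\mu'\setminus F)$ of $\mu$ has $\mu'$ a prefix of $\mu$. If $\mu'\ne\mu$, the neighbourhood contains $Z(\mu)$. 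If $\mu'=\mu$, it contains all $Z(\mu f_k)$ with $f_k\notin F$, hence all but finitely many $y_k$. So $y_k\to\mu$.

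For the consequence, let $C$ be an infinite compact open subset. It is compact and infinite, so it has a nonisolated point within $C$. Indeed, if every point were isolated in $C$, then $C$ would be discrete and compact, hence finite. Since $C$ is open, that point is nonisolated in $\partial E$. The sequence from the first part eventually lies in the open set $C$, and after dropping initial terms it is a nontrivial convergent sequence in $C$.

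The main obstacle is the bookkeeping in Case 1: one must verify that the basic sets $Z(\mu\setminus F)$ containing an infinite path really contain some $Z(e_1\cdots e_{n+1})$. One must also ensure that the boundary paths used as witnesses exist, in particular by extending finite paths to boundary paths in arbitrary, non-row-finite graphs.
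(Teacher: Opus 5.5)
Your proposal is correct and follows essentially the same route as the paper: the same finite/infinite case split, the same sequence of boundary paths extending $\mu f_k$ through distinct edges at an infinite emitter, the cylinders $Z(e_1\cdots e_n)$ at an infinite path, and the same compact-discrete-implies-finite argument for the consequence. Your version is somewhat more explicit. You check that the cylinders $Z(e_1\cdots e_n)$ form a neighbourhood base, and you justify distinctness by noting that each $y\neq x$ lies in only finitely many of them. The underlying argument is the same.
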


\begin{proof}
Let $x\in\partial E$ be nonisolated.

If $x=\mu$ is finite, then $\mathbf{r}(\mu)$ cannot be a sink, because $Z(\mu)=\{\mu\}$ in that case.  Thus $\mathbf{r}(\mu)$ is an infinite emitter.  Choose distinct edges $e_n\in \mathbf{s}^{-1}(\mathbf{r}(\mu))$ and choose a boundary path $y_n$ beginning at $\mathbf{r}(e_n)$.  Then
\[
 x_n=\mu e_ny_n
\]
converges to $\mu$: a basic neighborhood $Z(\mu\setminus F)$ excludes only finitely many first edges.

If $x=x_1x_2\cdots$ is infinite, then for each $n$ the cylinder $Z(x_1\cdots x_n)$ contains a point $y_n\neq x$.  We may choose $y_n$ so that it first differs from $x$ after the $n$th edge.  Then $y_n\to x$, and a subsequence is pairwise distinct.

Now let $U\subseteq\partial E$ be infinite and compact open.  If every point of $U$ were isolated, then $U$ would be a compact discrete space and hence finite.  Thus $U$ contains a nonisolated point.  Since $U$ is open, the sequence above can be chosen eventually inside $U$.
\end{proof}



\begin{proposition}\label{cor:graph-stonean-finite}
If every compact open subset of $\partial E$ is extremally disconnected, then every compact open subset of $\partial E$ is finite.  In particular, $\partial E$ and $\G_E$ are discrete.
\end{proposition}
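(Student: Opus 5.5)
We argue by contradiction, combining \Cref{lem:boundary-sequence} with the standard fact that a compact extremally disconnected space contains no nontrivial convergent sequence. Suppose some compact open $U\subseteq\partial E$ is infinite. By \Cref{lem:boundary-sequence}, $U$ contains a sequence $(x_n)$ of pairwise distinct points converging to a point $x\in U$ with $x_n\neq x$ for all $n$.

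The core of the argument is to show directly that this contradicts extremal disconnectedness of $U$. This is the only step needing care, and it is where we spend the effort. Since $\partial E$ is Hausdorff and zero-dimensional, we inductively choose pairwise disjoint compact open sets $V_n\subseteq U$ with $x_n\in V_n$ and $x\notin V_n$. At stage $n$, separate $x_n$ from $x$ and from $x_1,\dots,x_{n-1}$ by a compact open neighborhood. Then shrink it to avoid the finitely many earlier $V_k$; this is possible because $x_n\notin V_k$ for $k<n$ once the $V_k$ are chosen small enough. To arrange the latter, pass to a subsequence and also keep each $V_k$ away from infinitely many later terms. A cleaner route is to use the convergence: pick a decreasing neighborhood base argument so that each $V_k$ misses all $x_m$ with $m>k$. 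This works since $x_m\to x\notin V_k$ with $V_k$ closed, so only finitely many $x_m$ lie in $V_k$; discard those and reindex.

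Having disjoint clopen $V_n$, set
\[
O_1=\bigcup_n V_{2n},\qquad O_2=\bigcup_n V_{2n+1}.
\]
These are disjoint open subsets of $U$, and $x$ lies in the closure of each, since $x_{2n}\to x$ and $x_{2n+1}\to x$. In an extremally disconnected space, disjoint open sets have disjoint closures: $\overline{O_1}$ is clopen, so $U\setminus\overline{O_1}$ is an open set containing $O_2$, and hence $\overline{O_2}\cap\overline{O_1}=\emptyset$. Here we use that $O_2\cap\overline{O_1}=\emptyset$ because $O_2$ is open and disjoint from $O_1$. This contradicts $x\in\overline{O_1}\cap\overline{O_2}$. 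Therefore every compact open subset of $\partial E$ is finite.

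For the final assertion, the compact open sets form a basis of $\partial E$, so each point has a finite open neighborhood. Since $\partial E$ is Hausdorff, each point is then isolated, and $\partial E$ is discrete. Discreteness of $\G_E$ follows exactly as in the proof of \Cref{cor:local-baer-discrete}: restrict an open bisection through $\gamma$ to the open singleton $\{\mathbf{s}(\gamma)\}$.
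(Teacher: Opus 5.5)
Your proof is correct and follows the paper's route: combine \Cref{lem:boundary-sequence} with the fact that an extremally disconnected space contains no nontrivial convergent sequences. The paper only cites that fact as standard, while you prove it directly using the disjoint clopen sets $V_n$ and the even/odd unions $O_1,O_2$. One small wording fix: $\overline{O_2}\cap\overline{O_1}=\emptyset$ follows because $U\setminus\overline{O_1}$ is \emph{closed} (since $\overline{O_1}$ is open), not because it is open.
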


\begin{proof}
	By the standard fact that a extremally disconnected Hausdorff space contains no nontrivial convergent sequences, an infinite compact open subset would contain a nontrivial convergent sequence by \Cref{lem:boundary-sequence}, a contradiction. 
 Thus compact open neighborhoods are finite, and $\partial E$ is discrete.  Since $\G_E$ is etale, discreteness of its unit space implies discreteness of the groupoid.
\end{proof}

\subsection{Discreteness in graph language}

Following \cite{HazratVas2018}, an infinite path is called an \emph{infinite sink} if it has neither cycles nor exits.  We say that an infinite path \emph{ends in a sink or a cycle} if, after deleting a finite initial segment, it is either an infinite sink or an infinite repetition of a cycle.

\begin{proposition}\label{prop:graph-discrete}
For a directed graph $E$, the following are equivalent.
\begin{enumerate}
\item\label{prop7.4_1} $\partial E$ is discrete.
\item\label{prop7.4_2} $\G_E$ is discrete.
\item\label{prop7.4_3} $E$ is row-finite and no-exit, and every infinite path ends in a sink or a cycle.
\end{enumerate}
\end{proposition}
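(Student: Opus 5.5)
The plan is to prove \ref{prop7.4_1}$\Leftrightarrow$\ref{prop7.4_2} by the étale argument from \Cref{cor:local-baer-discrete}, and then \ref{prop7.4_1}$\Leftrightarrow$\ref{prop7.4_3} by analysing when each boundary path is isolated.

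For \ref{prop7.4_2}$\Rightarrow$\ref{prop7.4_1}, note that $\partial E$ is the open subspace $\units$ of $\G_E$, so it inherits discreteness. For \ref{prop7.4_1}$\Rightarrow$\ref{prop7.4_2}, run the argument of \Cref{cor:local-baer-discrete}. Given $\gamma\in\G_E$, pick an open bisection $B\ni\gamma$. Since $\mathbf{s}|_B$ is a homeomorphism onto the open set $\mathbf{s}(B)$ and $\{\mathbf{s}(\gamma)\}$ is open, the set $\{\gamma\}=B\cap\mathbf{s}^{-1}(\mathbf{s}(\gamma))$ is open.

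For \ref{prop7.4_1}$\Rightarrow$\ref{prop7.4_3}, test isolatedness on explicit boundary paths.
\begin{itemize}
\item If some vertex $v$ is an infinite emitter, then $v\in\partial E$. By the first case of \Cref{lem:boundary-sequence} the paths $e_ny_n$ converge to $v$, so $v$ is not isolated. Hence $E$ is row-finite.
\item If a cycle $c$ based at $v$ has an exit $f$ with $\mathbf{s}(f)=\mathbf{s}(c_j)$ and $f\neq c_j$, consider $x=ccc\cdots\in\partial E$. The boundary paths $c^n c_1\cdots c_{j-1}f y_n$, with $y_n$ any boundary path from $\mathbf{r}(f)$ (which exists), are distinct from $x$ and converge to $x$. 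So $x$ is not isolated, and $E$ is no-exit.
\item Let $x$ be an infinite path that never ends in a sink or a cycle. Then $x$ has infinitely many indices at which a vertex $\mathbf{s}(x_{n+1})$ emits an edge other than $x_{n+1}$. If only finitely many exits occurred, some tail would have no exits; such a tail is either an infinite sink, or it revisits a vertex and, being exitless, equals an infinite repetition of a cycle. Branching off at these infinitely many indices gives boundary paths $y_n\to x$ with $y_n\neq x$, contradicting isolatedness.
\end{itemize}

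For \ref{prop7.4_3}$\Rightarrow$\ref{prop7.4_1}, take $x\in\partial E$.
\begin{itemize}
\item If $x=\mu$ is finite, then $\mathbf{r}(\mu)$ is singular. Row-finiteness makes it a sink, so $Z(\mu)=\{\mu\}$ is open.
\item If $x$ is infinite, write $x=\alpha x'$ where $x'$ is an infinite sink or $c^\infty$. No-exit means each vertex along $x'$ emits exactly one edge, and that edge continues $x'$. Since the cylinder $Z(\alpha\,x'_1)$ consists of boundary paths extending the prefix, every such path must follow $x'$ forever. A finite path cannot stop at a regular vertex, so $Z(\alpha x'_1)=\{x\}$ and $x$ is isolated.
\end{itemize}

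The main obstacle is the careful handling of the case ``does not end in a sink or cycle'' in the direction \ref{prop7.4_1}$\Rightarrow$\ref{prop7.4_3}, namely producing infinitely many exit points from the failure. The key fact there is that an exitless tail revisiting a vertex is forced to be periodic, because each vertex emits only the edge of the path.
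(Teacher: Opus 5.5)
Your proof is correct and follows the paper's argument essentially step for step. Both use étaleness for (1)$\Leftrightarrow$(2), the nonisolation of an infinite emitter and of $c^\infty$ for a cycle with an exit, an exitless-tail analysis of infinite paths, and singleton cylinders for (3)$\Rightarrow$(1); the only cosmetic difference is that you run the tail analysis contrapositively, with a sequence branching off at infinitely many exits, whereas the paper argues directly from a singleton neighbourhood $Z(\mu\setminus F)$ of the isolated path.
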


\begin{proof}
The equivalence of \ref{prop7.4_1} and \ref{prop7.4_2} follows from etaleness, as in \Cref{cor:local-baer-discrete}.

Assume \ref{prop7.4_1}.  If $v$ were an infinite emitter, then the finite boundary path $v$ would be nonisolated by the first construction in \Cref{lem:boundary-sequence}.  Hence $E$ is row-finite.  If a cycle $c$ had an exit, the periodic path $c^\infty$ would be approached by paths that follow $c$ many times and then take the exit, so it would not be isolated.  Thus $E$ is no-exit.

Let $x=x_1x_2\cdots$ be infinite.  Since $x$ is isolated, a generalized cylinder $Z(\mu\setminus F)$ determined by a finite prefix contains only $x$.  Extend $\mu$ by the next edge of $x$.  From that point onward, every vertex along $x$ must emit only the edge used by $x$; otherwise one could branch and extend to another boundary path still lying in $Z(\mu\setminus F)$.  Thus a tail of $x$ has no exits.  If no vertex repeats, the tail is an infinite sink.  If a vertex repeats, the segment between two occurrences is a cycle and the no-exit condition forces the tail to repeat it forever.  This proves \ref{prop7.4_3}.

Conversely, assume \ref{prop7.4_3}.  A finite boundary path ends at a sink, because row-finiteness rules out infinite emitters, and hence is isolated.  If an infinite path eventually repeats a no-exit cycle, a cylinder beginning at the periodic tail is a singleton.  If it eventually becomes an infinite sink, a cylinder beginning where the exit-free tail starts is again a singleton.  Thus every boundary path is isolated.
\end{proof}



\subsection{The orbit decomposition of a no-exit Leavitt path algebra}

Two boundary paths lie in the same orbit of $\G_E$ precisely when they are shift-tail equivalent.  Let $\mathfrak A(E)$ be the set of aperiodic orbits and $\mathfrak P(E)$ the set of eventually periodic orbits.

\begin{corollary}\label{cor:graph-orbit-decomp}
Suppose $E$ satisfies the equivalent conditions of \Cref{prop:graph-discrete}.  Then
\begin{equation}\label{eq:graph-ungraded-decomp}
 L_{\K}(E)
 \cong
 \bigoplus_{\cO\in\mathfrak A(E)}M_{\cO}^{\fin}(\K)
 \;\oplus\!
 \bigoplus_{\cO\in\mathfrak P(E)}M_{\cO}^{\fin}(\K[t,t^{-1}])
\end{equation}
as algebras with involution.

If $p_{\cO}$ is the least positive period of an orbit $\cO\in\mathfrak P(E)$, then the natural $\Z$-grading corresponds to shifted matrix gradings over $\K$ and $\K[t^{p_{\cO}},t^{-p_{\cO}}]$.
\end{corollary}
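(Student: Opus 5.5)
The plan is to compose the graded $*$-isomorphism $\Psi_E$ of \eqref{eq:lpa-steinberg} with the orbit-matrix isomorphism $\Phi$ of \Cref{thm:orbit-matrix}. By \Cref{prop:graph-discrete}, $\G_E$ is discrete, so \Cref{thm:orbit-matrix} applies and gives a $*$-isomorphism
\[
 L_{\K}(E)\cong A_{\K}(\G_E)\cong\bigoplus_{\cO\in\partial E/\G_E}M^{\fin}_{\cO}(\K H_{\cO}).
\]
The orbits of $\G_E$ are the shift-tail equivalence classes. What remains is to identify each isotropy group algebra $\K H_{\cO}$, together with its involution and grading.

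For an aperiodic orbit, the isotropy at the base point is trivial. Indeed, an isotropy arrow $(x,k,x)$ with $k\neq0$ would give $\sigma^m(x)=\sigma^n(x)$ with $m\neq n$, forcing $x$ to be eventually periodic. Finite boundary paths are aperiodic in this sense, since $\sigma^m(x)=\sigma^n(x)$ with $m\ne n$ is impossible for lengths reasons. So $\K H_{\cO}=\K$, and these blocks are $M^{\fin}_{\cO}(\K)$.

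For an eventually periodic orbit, choose the base point $u_{\cO}=c^\infty$, where $c$ is a cycle of minimal length $p_{\cO}$, which is possible after shifting a representative. The isotropy group $H_{\cO}$ is $\{(u,k,u)\mid k\in p_{\cO}\Z\}$. It embeds in $\Z$ via $c_E$, as recorded before \Cref{lem:boundary-sequence}, and minimality of the period gives exactly the image $p_{\cO}\Z$. Sending the generator $(u,p_{\cO},u)$ to $t$ gives a $*$-isomorphism $\K H_{\cO}\cong\K[t,t^{-1}]$, with the canonical involution $h\mapsto h^{-1}$ corresponding to $t\mapsto t^{-1}$ and coefficients conjugated. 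Since both sides carry the canonical involution from \Cref{thm:orbit-matrix}, this yields \eqref{eq:graph-ungraded-decomp} as algebras with involution.

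For the grading, use the cocycle $c_E$ in \Cref{thm:orbit-matrix}. The isotropy element $(u,kp_{\cO},u)$ has degree $kp_{\cO}$, so the graded isotropy algebra is $\K[t^{p_{\cO}},t^{-p_{\cO}}]$ with its natural grading, where we now send the generator to $t^{p_{\cO}}$. Trivial isotropy gives $\K$ with the trivial grading. The shifts are $\gamma_v=c_E(\tau_v)$, where $\tau_v=(v,m-n,u)$. \Cref{thm:orbit-matrix} then gives exactly the shifted matrix gradings claimed.

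The only delicate point is computing the isotropy group precisely: checking that finite and aperiodic paths have trivial isotropy, and that in the periodic case the image of $c_E$ is exactly $p_{\cO}\Z$, rather than a proper multiple. For the latter, $(x,k,x)\in\G_E$ means $\sigma^m(x)=\sigma^n(x)$ with $k=m-n$, so $|k|$ is a period of the tail $c^\infty$. Every period of $c^\infty$ is a multiple of its least period $p_{\cO}$ by the standard gcd argument on periods of infinite words, and $p_{\cO}$ itself is attained.
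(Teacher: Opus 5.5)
Your proposal is correct and follows essentially the same route as the paper: transport through $\Psi_E$, apply \Cref{thm:orbit-matrix} to the discrete groupoid $\G_E$, and identify the isotropy as trivial on aperiodic orbits and as $p_{\cO}\Z$ via $c_E$ on eventually periodic orbits. You give more detail than the paper's short proof, namely the length argument for finite boundary paths and the fact that every period of $c^\infty$ is a multiple of $p_{\cO}$.
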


\begin{proof}
By \Cref{prop:graph-discrete}, $\G_E$ is discrete.  Aperiodic units have trivial isotropy, while an eventually periodic unit has isotropy infinite cyclic and the cocycle maps a positive generator to its least period.  Apply \Cref{thm:orbit-matrix} and the isomorphism \eqref{eq:lpa-steinberg}.
\end{proof}

This is the groupoid origin of the familiar decomposition of no-exit Leavitt path algebras into finitary matrix algebras over $\K$ and $\K[t,t^{-1}]$.  Sinks and infinite sinks produce trivial-isotropy orbits; cycles produce Laurent polynomial blocks.  The cardinality of an orbit is the number of compatible finite approaches to the corresponding terminal object.

\subsection{Rickart and Baer characterizations of Leavitt path algebras}

We can now state the full application.  The local Baer $*$ statement below includes the row-finiteness and infinite-path hypotheses that are needed in the proof.  Their omission from the printed statement of \cite[Theorem 16]{HazratVas2018} is a typographical error noted explicitly in \cite[Remark 6.8]{JubeirVanWyk2024}.

\begin{theorem}	\label{thm:lpa-main}
Let $E$ be an arbitrary graph and $\K$ a field.

\begin{enumerate}
\item\label{theo7.7_1} \emph{Rickart conditions.}
The algebra $L_{\K}(E)$ is locally Rickart and hence graded locally Rickart.  The following are equivalent:
\begin{enumerate}[label=\textup{(\roman*)}]
\item $E^0$ is finite;
\item $L_{\K}(E)$ is right Rickart;
\item $L_{\K}(E)$ is left Rickart;
\item $L_{\K}(E)$ is graded right Rickart;
\item $L_{\K}(E)$ is graded left Rickart.
\end{enumerate}
If $\K$ is positive definite, then $L_{\K}(E)$ is graded locally Rickart $*$, and the conditions above are also equivalent to $L_{\K}(E)$ being graded Rickart~$*$.

\item\label{theo7.7_2}  \emph{Local Baer and graded local Baer conditions.}
The following are equivalent:
\begin{enumerate}[label=\textup{(\roman*)}]
\item $E$ is row-finite and no-exit, and every infinite path ends in a sink or a cycle;
\item $L_{\K}(E)$ is locally Baer;
\item $L_{\K}(E)$ is graded locally Baer.
\end{enumerate}
If $\K$ is positive definite, these are also equivalent to $L_{\K}(E)$ being graded locally Baer $*$.

\item\label{theo7.7_3}  \emph{Unital Baer and graded Baer conditions.}
The following are equivalent:
\begin{enumerate}[label=\textup{(\roman*)}]
\item $E$ is finite and no-exit;
\item $L_{\K}(E)$ is Baer;
\item $L_{\K}(E)$ is graded Baer.
\end{enumerate}
If $\K$ is positive definite, these are also equivalent to $L_{\K}(E)$ being graded Baer $*$.

\item\label{theo7.7_4}  \emph{Ungraded local Baer $*$ condition.}
Assume $\K$ is positive definite.  Then $L_{\K}(E)$ is locally Baer $*$ if and only if $E$ is a disjoint union of components of the following two types:
\begin{enumerate}[label=\textup{(\roman*)}]
\item a row-finite acyclic graph in which every infinite path ends in an infinite sink;
\item an isolated loop.
\end{enumerate}

\item\label{theo7.7_5}  \emph{Ungraded Baer $*$ condition.}
Assume $\K$ is positive definite.  Then $L_{\K}(E)$ is Baer $*$ if and only if $E$ is a finite disjoint union of finite acyclic graphs and isolated loops.
\end{enumerate}
\end{theorem}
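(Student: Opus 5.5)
We prove \Cref{thm:lpa-main} by transporting every condition along the graded $*$-isomorphism $\Psi_E$ of \eqref{eq:lpa-steinberg}, so that each statement becomes a statement about $A_{\K}(\G_E)$ with the cocycle grading $c_E$. The group $\G_E$ has cyclic isotropy and $c_E$ is isotropy-faithful. Compact open subsets of $\partial E$ need not be metrizable, so instead of \Cref{cor:local-baer-discrete} we use \Cref{thm:local-baer-stonean} together with \Cref{cor:graph-stonean-finite}. Any of the four local Baer conditions makes every compact open subset of $\partial E$ extremally disconnected, hence finite. So $\G_E$ is discrete, and \Cref{prop:graph-discrete} converts this into the graph condition in \ref{theo7.7_2}(i).

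For \ref{theo7.7_1} we do not use groupoids for the ungraded part. Leavitt path algebras are locally unital and hereditary, by \cite{AbramsArandaPino2005,AbramsAraSilesMolina2017}, so \Cref{lem:standard-ring-facts}\ref{lem2.3_1} gives local right Rickart. The involution is an anti-isomorphism $L_{\K}(E)\cong L_{\K}(E)^{\mathrm{op}}$, so the left version follows as well. Graded local Rickart then comes from \Cref{lem:standard-ring-facts}\ref{lem2.3_3}.

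For the equivalences in \ref{theo7.7_1}:
\begin{itemize}
\item Each Rickart condition forces a unit, and $L_{\K}(E)$ is unital exactly when $E^0$ is finite.
\item Conversely, if $E^0$ is finite, the identity is a finite sum of vertices, and the local statement applied to that corner gives the global one.
\item For the $*$-part, $A_{\K}(\G_E)$ is graded regular because the neutral component of $\G_E$ is a directed union of compact open subgroupoids (the $c_E^{-1}(0)$ AF-structure), and isotropy of $\G_0$ is trivial. So \Cref{cor:svw-rickart} applies, and again the unit exists iff $E^0$ is finite.
\end{itemize}

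For \ref{theo7.7_2}, necessity was obtained above. Sufficiency follows from \Cref{prop:graph-discrete} and \Cref{thm:cyclic-local}\ref{theo6.3_1}--\ref{theo6.3_2}. The proof of \Cref{thm:cyclic-local} only used metrizability to get discreteness, and discreteness is now supplied directly.

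For \ref{theo7.7_3}, the Baer conditions force a unit, so $E^0$ is finite, and then give the local conditions. A finite vertex set with row-finiteness gives a finite edge set. If $E$ is finite, every infinite path automatically ends in a cycle, so the condition in \ref{theo7.7_2} reduces to no-exit. Conversely, if $E$ is finite and no-exit, $\partial E$ is discrete and compact, hence finite, and \Cref{cor:cyclic-unital} applies with discreteness given.

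For \ref{theo7.7_4}, \Cref{thm:cyclic-local}\ref{theo6.3_3} reduces the problem to showing the following: under the conditions of \Cref{prop:graph-discrete}, every eventually periodic orbit is a singleton iff $E$ is a disjoint union of components of types (i) and (ii). This translation is the main obstacle.

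The eventually periodic orbits are the tail classes of $c^\infty$ for no-exit cycles $c$. Such an orbit contains $\mu c^\infty$ for every finite path $\mu$ ending on $c$.
\begin{itemize}
\item If $c$ has length at least $2$, the rotations of $c^\infty$ are distinct points of the orbit.
\item If $c$ is a loop at $v$ and some edge $e\neq c$ enters $v$, then $e c^\infty \neq c^\infty$ lies in the orbit.
\end{itemize}
So a singleton orbit forces $c$ to be a loop with no entrance; with no-exit, it is an isolated loop component. The remaining components contain no cycles, so they are acyclic and row-finite, and every infinite path ends in an infinite sink. Conversely, in such a graph each periodic orbit is $\{c^\infty\}$ for an isolated loop $c$.

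For \ref{theo7.7_5}, add unitality. Then $E^0$ is finite, so the graph has finitely many components and each acyclic component is finite, since row-finite with finitely many vertices means finitely many edges. Conversely, such a graph has a finite unit space, and the last assertion of \Cref{cor:cyclic-unital} applies.
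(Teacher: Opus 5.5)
Your proposal is correct and follows the paper's proof essentially step for step. You transport everything along $\Psi_E$ and obtain discreteness from \Cref{thm:local-baer-stonean} and \Cref{cor:graph-stonean-finite} rather than the metrizable corollary. For sufficiency you invoke \Cref{thm:cyclic-local} and \Cref{cor:cyclic-unital} directly; this is legitimate because discreteness makes compact open sets finite and hence metrizable. Part (4) uses the same cycle-to-isolated-loop translation as the paper.

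The only deviations are cosmetic: you get graded regularity via \Cref{cor:svw-rickart} instead of Hazrat's theorem, a route the paper also mentions. There is also one small wording slip in part (3): infinite paths in a finite graph end in a cycle only in the presence of no-exit, which you do have at that point.
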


\begin{proof}
\ref{theo7.7_1} Leavitt path algebras are left and right hereditary, hence left and right semihereditary \cite{AraGoodearl2012}.  They have local units given by finite sums of vertices, so \Cref{lem:standard-ring-facts}\ref{lem2.3_1} gives local Rickart.  Equivalently, one may transport this fact through \eqref{eq:lpa-steinberg}.  If $E^0$ is finite, the algebra is unital and therefore Rickart.  Conversely, every Rickart or graded Rickart ring is unital, and a Leavitt path algebra is unital exactly when $E^0$ is finite.  The involution interchanges right and left annihilators.

If $\K$ is positive definite, \Cref{thm:positive definite} shows that the Steinberg model is positive definite.  The natural graph grading is graded regular: this follows from Hazrat's theorem \cite{Hazrat2014} or from the neutral-component criterion of \cite{SteinbergVanWyk2026}.  Hence \Cref{cor:graded-rickart-star} gives graded local Rickart $*$.  The unital equivalence follows as above.

\ref{theo7.7_2} Suppose $L_{\K}(E)$ is locally Baer or graded locally Baer.  Through \eqref{eq:lpa-steinberg} and \Cref{thm:local-baer-stonean}, every compact open subset of $\partial E$ is extremally disconnected.  By \Cref{cor:graph-stonean-finite}, $\G_E$ is discrete.  \Cref{prop:graph-discrete} gives the graph condition.

Conversely, the graph condition makes $\G_E$ discrete.  Its isotropy is trivial or infinite cyclic, so the orbit decomposition and \Cref{lem:matrix-laurent}\ref{lem6.2_1} show that every finite local-unit corner is Baer.  This proves locally Baer, and hence graded locally Baer.  If $\K$ is positive definite, the natural cocycle is isotropy-faithful, and \Cref{lem:matrix-laurent}\ref{lem6.2_2} gives graded locally Baer $*$.

\ref{theo7.7_3} A Baer or graded Baer Leavitt path algebra is unital, so $E^0$ is finite.  Applying \ref{theo7.7_2}, the graph is row-finite and no-exit; with finitely many vertices this says that $E$ is finite and no-exit.  Conversely, for a finite no-exit graph, \ref{theo7.7_2} gives the local property and finite $E^0$ supplies an identity.  The graded Baer $*$ statement is identical.

\ref{theo7.7_4} First suppose $L_{\K}(E)$ is locally Baer $*$.  The compact open unit corners are Baer $*$ and hence Baer, so the argument in \ref{theo7.7_2} again shows that $\G_E$ is discrete.  By \Cref{thm:cyclic-local}\ref{theo6.3_3}, every orbit carrying nontrivial isotropy is a singleton.  Therefore every cycle is a loop: a cycle of length greater than one gives distinct cyclic shifts in the same periodic orbit.  No path from another vertex can enter such a loop, because adjoining that finite entrance to the periodic tail gives a second unit in the orbit.  Since discreteness already gives the no-exit condition, each cyclic component is an isolated loop.  Every remaining component is acyclic; \Cref{prop:graph-discrete} says it is row-finite and each infinite path ends in an infinite sink.

Conversely, an acyclic component has only trivial isotropy, and the stated path condition makes its boundary groupoid discrete.  An isolated loop contributes the one-point groupoid with isotropy $\Z$.  Thus every trivial-isotropy finite corner is a matrix algebra over $\K$, while every nontrivial-isotropy orbit contributes only $\K[t,t^{-1}]$.  These are Baer $*$ by \Cref{lem:matrix-laurent}\ref{lem6.2_3}, so \Cref{thm:discrete-local-criterion} gives local Baer $*$.

\ref{theo7.7_5} 
A Baer $*$-ring is unital, so combine \ref{theo7.7_4} with finiteness of $E^0$. 
Row-finite acyclic components with finitely many vertices are finite, and there are only finitely many components.  The converse follows by the same finite-product argument.
\end{proof}

\begin{corollary}\label{cor:lpa-positive}
For every nonempty graph $E$, the canonical involution on $L_{\K}(E)$ is positive definite if and only if the involution on $\K$ is positive definite.
\end{corollary}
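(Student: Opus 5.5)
The plan is to transport the question through the graded $*$-isomorphism $\Psi_E:L_{\K}(E)\to A_{\K}(\G_E)$ of \eqref{eq:lpa-steinberg} and then apply \Cref{thm:positive definite}. Since $\Psi_E$ is a $*$-isomorphism, the involution on $L_{\K}(E)$ is positive definite exactly when the involution on $A_{\K}(\G_E)$ is. Indeed, $\sum x_ix_i^*=0$ holds in one algebra if and only if the image relation holds in the other, and $\Psi_E$ is injective. So the corollary reduces to checking the hypotheses of \Cref{thm:positive definite} for $\G=\G_E$.

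The only hypothesis to verify is that $\G_E$ is a nonempty Hausdorff ample groupoid. It is Hausdorff and ample as recalled in \Cref{sec:graphs}. It is nonempty because $E$ is nonempty, which means $E^0\neq\emptyset$, and $\partial E$ is then nonempty. To see this, take any vertex $v$. If $v$ is singular, the path $v$ of length zero lies in $\partial E$. Otherwise $v$ emits an edge, and we follow edges as long as the current range vertex is regular. This produces either a finite path ending at a singular vertex or an infinite path; either way it is a boundary path. Its unit lies in $\units=\partial E$, so $\G_E\neq\emptyset$.

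With these in hand, \Cref{thm:positive definite} gives the equivalence directly. As a sanity check on the converse, which is the more delicate direction: a vertex $v$ satisfies $vv^*=v$ and $v\neq0$ in $L_{\K}(E)$. So $\sum k_i\overline{k_i}=0$ gives $\sum (k_iv)(k_iv)^*=0$, and positive definiteness forces $k_iv=0$, hence $k_i=0$. This argument can even be done intrinsically without the groupoid.

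The main point requiring care is the nonemptiness of $\partial E$, equivalently the fact that $v\neq0$ in $L_{\K}(E)$. The graded $*$-isomorphism with the Steinberg algebra supplies this. I expect no further obstacle.
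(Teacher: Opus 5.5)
Your proposal is correct and takes the same route as the paper, whose proof is simply to transport \Cref{thm:positive definite} through the graded $*$-isomorphism $\Psi_E$ of \eqref{eq:lpa-steinberg}. Your explicit check that $\partial E\neq\emptyset$ (hence $\G_E\neq\emptyset$) supplies the nonemptiness hypothesis that the paper leaves implicit.
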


\begin{proof}
Transport \Cref{thm:positive definite} through \eqref{eq:lpa-steinberg}.
\end{proof}

\begin{remark}\label{rem:why-star-stricter}
The difference between parts \ref{theo7.7_3} and \ref{theo7.7_5} of \Cref{thm:lpa-main} is not a mysterious graph artifact.  A no-exit cycle whose periodic orbit contains $n$ boundary paths contributes a Laurent block $M_n(\K[t,t^{-1}])$.  This block is Baer for every $n$, and it is graded Baer $*$ for the graph grading, but it is ungraded Rickart $*$ only when $n=1$.  The condition $n=1$ says exactly that the cycle is an isolated loop.
\end{remark}

\section{Examples}\label{sec:examples}

We close by giving examples that separate the different mechanisms in the paper.

\begin{example}[Pair groupoids]
Let $I$ be any set and let $\G=I\times I$ be the discrete pair groupoid.  It has trivial isotropy and one orbit, so
\[
 A_{\K}(\G)\cong M_I^{\fin}(\K).
\]
The algebra is locally Baer for every field and locally Baer $*$ for every positive definite $*$-field.  It is Baer precisely when $I$ is finite.  This example isolates orbit cardinality from isotropy.
\end{example}

\begin{example}[A cyclic transitive orbit]
Let $\G$ be a discrete transitive groupoid with unit orbit $I$ and isotropy $\Z$.  Then
\[
 A_{\K}(\G)\cong M_I^{\fin}(\K[t,t^{-1}]).
\]
It is locally Baer.  With an isotropy-faithful grading and positive definite coefficients it is graded locally Baer $*$.  For positive definite $\K$, it is locally Baer $*$ exactly when $|I|=1$.  This is the universal local model for the cycle obstruction in a Leavitt path algebra.
\end{example}

\begin{example}[A Cantor unit groupoid]
Let $X$ be the Cantor set, viewed as a unit groupoid.  Then $A_{\K}(X)$ is von Neumann regular: for a locally constant function $f$, define $g(x)=f(x)^{-1}$ on the support of $f$ and $g(x)=0$ elsewhere.  Thus $A_{\K}(X)$ is locally Rickart, and for positive definite $\K$ it is locally Rickart $*$.  It is not Baer, because $X$ is not extremally disconnected.  Hence regularity and even $*$-regularity do not imply the Baer condition. 
\end{example}

\begin{example}[A nondiscrete Baer Steinberg algebra]
As in \Cref{rem:metric-essential}, the unit groupoid on an infinite Stonean compact space $X$ has a Baer Steinberg algebra.  This shows that \Cref{cor:cyclic-unital} is not the final form of a classification for arbitrary nonmetrizable unit spaces.  
\end{example}

\begin{example}[Three graph blocks]
Let $\K$ be positive definite.
\begin{enumerate}
\item A finite acyclic graph gives a finite product of matrix algebras over $\K$ and therefore a Baer $*$ Leavitt path algebra.
\item A no-exit cycle of length $n>1$ gives $M_n(\K[t,t^{-1}])$, which is Baer and graded Baer $*$ but not Rickart $*$.
\item An isolated loop gives $\K[t,t^{-1}]$, which is Baer $*$.
\end{enumerate}
These are the atomic blocks behind parts \ref{theo7.7_3} and \ref{theo7.7_5} of \Cref{thm:lpa-main}.
\end{example}

\bibliographystyle{amsplain}
\bibliography{references}

\end{document}